\newtheorem{theorem}{Theorem}[section]
\newtheorem{lemma}[theorem]{Lemma}
\newtheorem{corollary}[theorem]{Corollary}
\newtheorem{proposition}[theorem]{Proposition}
\newtheorem*{notation*}{Notation}
\newtheorem*{p*}{Proposition~\ref{h.s.o.p}}
\theoremstyle{definition}
\newtheorem{definition}[theorem]{Definition}
\newtheorem{problem}[theorem]{Problem}
\newtheorem{remark}[theorem]{Remark}
\newcommand{\R}{{\mathbb{R}}}
\newcommand{\C}{{\mathbb C}}
\newcommand{\vol}{\operatorname{vol}}
\newcommand{\Sm}{\operatorname{S}}
\title{Highly entangled tensors}
\author{Harm Derksen and Visu Makam}
\thanks{This material is based upon work supported by the National Science Foundation under Grant No. DMS-1601229 and DMS-1638352}
\begin{document}

\maketitle

\begin{abstract}
A geometric measure for the entanglement of a unit length tensor $T \in (\C^n)^{\otimes k}$ is given by $- 2 \log_2 ||T||_\sigma$, where $||.||_\sigma$ denotes the spectral norm. A simple induction gives an upper bound of $(k-1) \log_2(n)$ for the entanglement. We show the existence of tensors with entanglement larger than $k \log_2(n) - \log_2(k) - o(\log_2(k))$. Friedland and Kemp have similar results in the case of symmetric tensors. Our techniques give improvements in this case.

\end{abstract}

\tableofcontents

\section{Introduction}
We consider  the tensor product $V = V_1 \otimes V_2 \otimes \dots  \otimes V_k$ of finite dimensional Hilbert
spaces $V_1,V_2,\dots,V_k$. 
A tensor of the form $T = v_1 \otimes v_2 \otimes \dots \otimes v_k \in V$ is called a {\em pure} or {\em simple} tensor. A general tensor $T \in V$ is far from pure. Each vector space $V_i$ is equipped with positive definite hermitian form $\langle \cdot,\cdot\rangle$. 
The space $V$ is again a Hilbert space, and its positive definite hermitian form $\langle\cdot,\cdot\rangle$ has  the property $\langle v_1\otimes v_2\otimes \cdots\otimes v_k,w_1\otimes w_2\otimes\cdots\otimes w_k\rangle=\prod_{i=1}^k\langle v_i,w_i\rangle$. The euclidean norm on a Hilbert space is defined by $\|v\|=\sqrt{\langle v,v\rangle}$. We call a tensor $T\in V$ a {\em unit tensor} if $\|T\|=1$.

In quantum physics, a quantum state is a unit tensor $T\in V$. A quantum state is called a {\em pure} or {\em unentangled} state if $T$ is a pure tensor.
If $T$ is not pure, then the quantum state is called a {\em mixed} or {\em entangled} state. In the theory of computation, there seems to be a significant upgrade in speed offered by quantum algorithms in comparison with classical ones. A large part of this can be attributed to entanglement. 

Various measures for entanglement have been studied owing to different motivations and perspectives. We refer the interested reader to the excellent surveys \cite{AFOV,HHHH,PV}. Using the geometric measure of entanglement (considered previously in \cite{Shimony, BL, WG}), Gross, Flammia and Eisert in \cite{GFE} argue that most quantum states are too entangled to be useful. On the other hand, observable states for any quantum computer based on photon interactions are symmetric quantum states. For symmetric tensors, the entanglement is much smaller as shown in \cite{FK}. They also show that most symmetric quantum states are close to being maximally entangled. In this paper, we generalize and improve some of the results in \cite{GFE, FK}\footnote{The results in \cite{GFE,FK} use an concentration of measure inequality that needs a correction. This is Equation~(2) in \cite{GFE} (and Lemma~3.7 in \cite{FK}). This does not disrupt their results significantly, but they do require a correction.}.

 We consider here two entanglement measures that are related to (non-euclidean) norms on $V$. Specifically we consider nuclear and spectral norms. These norms have applications to tensor completion and low rank tensor approximation, see for e.g., \cite{FO,NW,YZ}. In general, the computation of these norms is NP-hard, see \cite{FL,FL2}. In \cite{Nie}, computation of nuclear norms for symmetric tensors is addressed.
    
\begin{definition}\label{def1}
The {\em spectral norm} $\|T\|_\sigma$ of a tensor $T\in V$ is defined as the maximal value of $|\langle T,u\rangle|,$ where $u$ ranges over all pure tensors 
with $\|u\|=1$.
\end{definition}
\begin{definition}\label{def2}
The {\em nuclear norm} $\|T\|_\star$ of a tensor $T\in V$ is the minimal value of $\|u_1\|+\|u_2\|+\cdots+\|u_r\|$ over all decompositions
$T=u_1+u_2+\cdots+u_r$ where $r$ is a nonnegative integer and $u_1,u_2,\dots,u_r$ are pure tensors.
\end{definition}
The maximum in Definition~\ref{def1} exists because the unit sphere is compact, and one can show that the minimum in Definition~\ref{def2} exists as well.
The spectral and nuclear norms are dual to each other (see \cite{DFLW}). In particular, we have
\begin{equation}\label{eq2}
|\langle T,S\rangle|\leq \|T\|_{\sigma}\|S\|_\star.
\end{equation}

The following measures of entanglement were considered in \cite{DFLW}.
\begin{definition} \label{geom.m.e}
For a unit tensor $T$ we define two entanglement measures by 
$$
E(T)=-2\log_2(\|T\|_\sigma)\mbox{ and }F(T)=2\log_2(\|T\|_\star).
$$
\end{definition}
From (\ref{eq2}), it follows that $\|T\|_\sigma\|T\|_\star\geq \langle T,T\rangle=1$ and  $F(T)\geq E(T)$. From the Cauchy-Schwarz inequality follows $\|T\|_\sigma\leq \|T\|=1$ and $E(T)\geq 0$.  It is also known that the maximum possible entanglement in both measures is the same, see \cite[Corollary~6.1]{DFLW}. The measure $E(T)$ is the geometric measure of entanglement that we mentioned before.

\subsection{General tensors}
We consider the setting when $V_1=V_2=\cdots=V_k=\C^n$, and $V=\C^n\otimes \C^n\otimes \cdots\otimes \C^n=(\C^n)^{\otimes k}$. The following result was obtained in \cite{QZ}, but we include a proof in Section~\ref{main} for completeness.
\begin{proposition} \label{upper bounds}
For all unit length tensors $T \in (\C^n)^{\otimes k}$, we have 
$$E(T)\leq F(T) \leq (k-1) \log_2(n).$$
\end{proposition}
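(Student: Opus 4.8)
Since the inequality $E(T)\le F(T)$ is already recorded in the discussion following \eqref{eq2}, the entire content of the proposition is the bound $F(T)\le (k-1)\log_2(n)$. Unwinding the definition $F(T)=2\log_2(\|T\|_\star)$, this is equivalent to the estimate $\|T\|_\star\le n^{(k-1)/2}$ for every unit tensor $T\in(\C^n)^{\otimes k}$. So the plan is to bound the nuclear norm of a unit tensor, which by Definition~\ref{def2} amounts to exhibiting a decomposition of $T$ into pure tensors whose norms sum to at most $n^{(k-1)/2}$.

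I would prove the estimate $\|T\|_\star\le n^{(k-1)/2}$ by induction on $k$. The base case $k=1$ is immediate: here $V=\C^n$, every tensor is pure, so $\|T\|_\star=\|T\|=1=n^{0}$. For the inductive step write $V=\C^n\otimes(\C^n)^{\otimes(k-1)}$, fix an orthonormal basis $e_1,\dots,e_n$ of the first factor, and expand $T=\sum_{i=1}^n e_i\otimes T_i$ with $T_i\in(\C^n)^{\otimes(k-1)}$. Orthonormality of the $e_i$ gives $\sum_{i=1}^n\|T_i\|^2=\|T\|^2=1$.

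Two elementary properties of the nuclear norm drive the argument. First, it is subadditive (the triangle inequality, immediate from Definition~\ref{def2} by concatenating decompositions). Second, tensoring with a fixed unit vector does not increase it: given any pure-tensor decomposition of $S\in(\C^n)^{\otimes(k-1)}$, prefixing each summand by $e_i$ produces a pure-tensor decomposition of $e_i\otimes S$ with the same sum of norms since $\|e_i\|=1$, so $\|e_i\otimes S\|_\star\le\|S\|_\star$. Combining these with the inductive hypothesis applied to the unit tensor $T_i/\|T_i\|$ (for $T_i\ne0$) yields
\[
\|T\|_\star\le\sum_{i=1}^n\|e_i\otimes T_i\|_\star\le\sum_{i=1}^n\|T_i\|_\star\le n^{(k-2)/2}\sum_{i=1}^n\|T_i\|.
\]
Finally I would invoke the Cauchy--Schwarz inequality on the vector $(\|T_1\|,\dots,\|T_n\|)$ to get $\sum_{i=1}^n\|T_i\|\le\sqrt n\,\big(\sum_{i=1}^n\|T_i\|^2\big)^{1/2}=\sqrt n$, whence $\|T\|_\star\le n^{(k-2)/2}\cdot n^{1/2}=n^{(k-1)/2}$, completing the induction.

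The argument is short and there is no serious obstacle; the one point that deserves care is the Cauchy--Schwarz step, which is exactly where the gain of the single factor $\sqrt n$ per tensor slot comes from and is hence responsible for the shape of the final bound. I would also double-check the two nuclear-norm facts (subadditivity and the tensoring inequality), since everything rests on them, and make sure that the normalization $T_i=\|T_i\|\cdot(T_i/\|T_i\|)$ together with the absolute homogeneity of $\|\cdot\|_\star$ is applied correctly so that the inductive hypothesis can be brought to bear.
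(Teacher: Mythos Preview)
Your proof is correct. It runs parallel to the paper's argument in that both proceed by induction on $k$ using the slice decomposition $T=\sum_i e_i\otimes T_i$ with $\sum_i\|T_i\|^2=1$, but the two proofs work on opposite sides of the duality. The paper bounds the spectral norm from below: from $\|T\|_\sigma\ge\|T_i\|_\sigma$ and the inductive bound $\|T_i\|_\sigma\ge C(n,k-1)\|T_i\|$ one gets $1=\sum_i\|T_i\|^2\le n\,\|T\|_\sigma^2/C(n,k-1)^2$, hence $E(T)\le(k-1)\log_2 n$; the bound on $F(T)$ is then deduced from the external fact \cite[Corollary~6.1]{DFLW} that $\max E=\max F$. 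You instead bound the nuclear norm from above, using subadditivity, $\|e_i\otimes T_i\|_\star\le\|T_i\|_\star$, the inductive hypothesis, and Cauchy--Schwarz on $(\|T_1\|,\dots,\|T_n\|)$. The payoff of your route is that it is self-contained: once $\|T\|_\star\le n^{(k-1)/2}$ is established, both inequalities $E(T)\le F(T)\le(k-1)\log_2 n$ follow immediately without appealing to \cite{DFLW}. The paper's route, on the other hand, isolates the quantity $C(n,k)=\min\|T\|_\sigma$, which is exactly the object studied in the rest of Section~\ref{sec:main}.
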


We adapt a technique often found in coding theory to show the existence of tensors whose entanglement is very close to the upper bounds given above. 

\begin{theorem} \label{main}
Let $n,k \geq 2$, and let 
$$C = \min \{||T||_\sigma \ | \ T \in  (\C^n)^{\otimes k} \text{ with }  ||T|| = 1 \}.$$ 
Then for any $0 < \varepsilon < 1$, we have 
$$C^2 \leq \frac{ k(n-1) \ln(\frac{4}{\varepsilon})}{(n^k-1)(1-\varepsilon)^k}.$$
\end{theorem}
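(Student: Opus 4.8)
The plan is to run the probabilistic method on a random unit tensor. Identify $V=(\C^n)^{\otimes k}$ with $\C^N$, $N=n^k$, and let $T$ be uniform on the unit sphere of $\C^N$. I will bound $\PP(\|T\|_\sigma^2\ge \tau)$ from above and show it is strictly less than $1$ as soon as $\tau$ exceeds the asserted bound $B=\frac{k(n-1)\ln(4/\varepsilon)}{(n^k-1)(1-\varepsilon)^k}$; then some unit tensor has spectral norm squared below $\tau$, and letting $\tau\downarrow B$ yields $C^2\le B$. The single probabilistic input is the exact tail of one inner product: for any fixed unit $u\in\C^N$, rotational invariance makes $|\langle T,u\rangle|^2$ a $\mathrm{Beta}(1,N-1)$ variable, so
$$\PP\big(|\langle T,u\rangle|^2\ge s\big)=(1-s)^{N-1},\qquad 0\le s\le 1.$$

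Second, I build a net adapted to the variety of pure states. In each factor $\C^n$ I take a maximal family $\mathcal N_i\subset S(\C^n)$ of unit vectors that are pairwise \emph{separated}, meaning $|\langle a,b\rangle|^2<1-\varepsilon$ for distinct $a,b$; maximality forces $\mathcal N_i$ to \emph{cover} $S(\C^n)$, in the sense that every unit $v$ admits $u'\in\mathcal N_i$ with $|\langle v,u'\rangle|^2\ge 1-\varepsilon$. Using the same Beta tail in dimension $n$, a cap $\{v:|\langle v,u'\rangle|^2\ge 1-\delta\}$ has measure $\delta^{\,n-1}$; a triangle-inequality argument for the Fubini--Study distance (together with superadditivity of $\arcsin$) shows the ``half-radius'' caps $\{v:|\langle v,u'\rangle|^2>1-\varepsilon/4\}$ about the separated points are disjoint, so $|\mathcal N_i|\,(\varepsilon/4)^{n-1}\le 1$ and $|\mathcal N_i|\le(4/\varepsilon)^{n-1}$. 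The product net $\mathcal N=\mathcal N_1\times\cdots\times\mathcal N_k$ of pure states then satisfies $|\mathcal N|\le(4/\varepsilon)^{k(n-1)}$.

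The crux, and the step I expect to be the main obstacle, is a \emph{multiplicative} comparison of $\|T\|_\sigma$ with $\max_{u'\in\mathcal N}|\langle T,u'\rangle|$; a naive additive net estimate loses control of the cross terms and would not produce the clean factor $(1-\varepsilon)^k$. Instead I exploit the stationarity of the maximizer. Let $u_1^*\otimes\cdots\otimes u_k^*$ attain $\|T\|_\sigma$. Fixing the last $k-1$ factors, $u_1\mapsto\langle T,u_1\otimes u_2^*\otimes\cdots\otimes u_k^*\rangle$ is a (conjugate-)linear functional, hence equals $\langle u_1,\phi\rangle$ for some $\phi$; its maximum over unit $u_1$ is $\|\phi\|=\|T\|_\sigma$ and is attained at $u_1^*$, forcing $\phi=\|T\|_\sigma\,u_1^*$ up to a unit phase, so that $|\langle T,u_1'\otimes u_2^*\otimes\cdots\otimes u_k^*\rangle|=\|T\|_\sigma\,|\langle u_1',u_1^*\rangle|$ for every unit $u_1'$. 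Choosing the net point $u_1'\in\mathcal N_1$ covering $u_1^*$ loses exactly the factor $\sqrt{1-\varepsilon}$. Contracting $T$ against $u_1'$ in the first slot produces a $(k-1)$-tensor whose spectral norm is at least $\sqrt{1-\varepsilon}\,\|T\|_\sigma$; repeating the argument on its own maximizer and iterating over all $k$ factors yields a point $u'=u_1'\otimes\cdots\otimes u_k'\in\mathcal N$ with
$$|\langle T,u'\rangle|\ge(1-\varepsilon)^{k/2}\,\|T\|_\sigma,\qquad\text{hence}\qquad \max_{u'\in\mathcal N}|\langle T,u'\rangle|^2\ge(1-\varepsilon)^k\,\|T\|_\sigma^2.$$

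Finally I combine the three ingredients through a union bound. Using the comparison, the net size, and $1-x\le e^{-x}$,
$$\PP\big(\|T\|_\sigma^2\ge\tau\big)\le\PP\Big(\max_{u'\in\mathcal N}|\langle T,u'\rangle|^2\ge(1-\varepsilon)^k\tau\Big)\le|\mathcal N|\,\big(1-(1-\varepsilon)^k\tau\big)^{N-1}\le\exp\!\Big(k(n-1)\ln\tfrac{4}{\varepsilon}-(n^k-1)(1-\varepsilon)^k\tau\Big),$$
which is strictly less than $1$ precisely when $\tau>B$. Hence a unit tensor with $\|T\|_\sigma^2<\tau$ exists for every $\tau>B$, and minimizing gives $C^2\le B$, as claimed.
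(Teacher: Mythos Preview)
Your proof is correct and follows essentially the same route as the paper. The paper phrases things geometrically---a covering of $S^{2n^k-1}$ by balls $B(v_{i_1}\otimes\cdots\otimes v_{i_k},1-C^2(1-\varepsilon)^k)$ together with the exact volume fraction $\vol(B(v,\delta))/\vol(S^{2N-1})=\delta^{N-1}$---while you phrase the identical content probabilistically via the $\mathrm{Beta}(1,N-1)$ tail and a union bound; the net construction, the multiplicative factor-by-factor comparison, and the final $\ln(1-x)\le -x$ step are the same in both.
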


Let $E_{\max}$ denote the maximum entanglement of a unit length tensor in $(\C^n)^{\otimes k}$, i.e., 
let us define
$$
E_{\max} = \max\{E(T)\ |\ T \in (\C^n)^{\otimes k} \text{ with }  ||T|| = 1\}.
$$

\begin{corollary} \label{cor:main}
Let $n \geq 2$ be fixed. Then for $k \geq 21$, we have
 $$E_{\max} \geq (k-1)\log_2(n) - \log_2(k) - \log_2(\ln(k)) - 2.$$
\end{corollary}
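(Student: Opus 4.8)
The plan is to extract the corollary directly from Theorem~\ref{main} by a careful choice of $\varepsilon$. Since $E(T)=-2\log_2\|T\|_\sigma$ decreases as $\|T\|_\sigma$ grows, the maximal entanglement is attained at the tensor of smallest spectral norm, so $E_{\max}=-2\log_2 C=-\log_2(C^2)$. Because $C^2$ is bounded above by the quantity in Theorem~\ref{main}, applying $-\log_2(\cdot)$ reverses the inequality and gives, for every $0<\varepsilon<1$,
$$E_{\max}\geq\log_2\!\left(\frac{(n^k-1)(1-\varepsilon)^k}{k(n-1)\ln(4/\varepsilon)}\right).$$
First I would isolate the main term. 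Splitting the logarithm and using the elementary bound $n^k-1\geq n^{k-1}(n-1)$ (equivalent to $n^{k-1}\geq 1$), one gets $\log_2(n^k-1)-\log_2(n-1)\geq (k-1)\log_2 n$, which accounts for the leading $(k-1)\log_2 n$ and, crucially, removes all dependence on $n$ from what remains.

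After this reduction the corollary becomes equivalent to the single-variable inequality
$$-k\log_2(1-\varepsilon)+\log_2\!\bigl(\ln(4/\varepsilon)\bigr)\leq\log_2(\ln k)+2,$$
so the whole problem reduces to choosing $\varepsilon=\varepsilon(k)$ making this hold for all $k\geq 21$. I expect this to be the only real difficulty, since the constants are genuinely tight near the threshold. The naive choice $\varepsilon=1/k$ makes the first term about $1/\ln2\approx 1.44$ while $\ln(4/\varepsilon)$ is of order $\ln k$; a direct check shows this narrowly \emph{fails} at $k=21$. The fix is to take $\varepsilon$ slightly smaller, of order $1/(k\ln k)$: minimizing the left-hand side over $\varepsilon$ leads to the stationarity condition $k\varepsilon\ln(4/\varepsilon)\approx 1$, whose solution is $\varepsilon\asymp 1/(k\ln k)$.

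Concretely I would set $\varepsilon=\tfrac{1}{k\ln k}$ and verify the displayed inequality for $k\geq 21$. For the first term, the estimate $-\ln(1-\varepsilon)\leq \varepsilon/(1-\varepsilon)$ gives $-k\log_2(1-\varepsilon)\leq \frac{1}{(\ln2)(\ln k)(1-\varepsilon)}$, which lies below $\tfrac12$ for every $k\geq 21$ and is decreasing in $k$. For the second term, $\ln(4/\varepsilon)=\ln 4+\ln k+\ln\ln k$, so after subtracting $\log_2(\ln k)$ the inequality collapses to controlling the ratio $(\ln 4+\ln\ln k)/\ln k$; writing $u=\ln k$, one checks that $(\ln4+\ln u)/u$ is decreasing for $u>e/4$, hence for all $k\geq 21$. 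Since both resulting terms are then decreasing in $k$, it suffices to confirm the inequality at the single value $k=21$, a finite computation, and combining the two bounds closes the argument.
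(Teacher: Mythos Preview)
Your argument is correct and follows the same overall route as the paper: take $-\log_2$ of the bound in Theorem~\ref{main}, extract the $(k-1)\log_2 n$ via $n^k-1\geq n^{k-1}(n-1)$, and then choose $\varepsilon$ to control the two residual terms. The only genuine difference is the choice of $\varepsilon$.

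You correctly observe that $\varepsilon=1/k$ fails by a hair at $k=21$, and you repair this by passing to $\varepsilon=1/(k\ln k)$, then arguing monotonicity plus a single numerical check. This works. The paper instead stays with $\varepsilon=c/k$ and simply shrinks the constant: setting $\varepsilon=\delta/k$ with $\delta=4/e^3$ gives $(1-\varepsilon)^k\geq 1-\delta$ (Bernoulli), so $\log_2(1-\varepsilon)^k\geq\log_2(1-4/e^3)\geq -1$, while $\ln(4/\varepsilon)=\ln k+3\leq 2\ln k$ exactly when $\ln k\geq 3$, i.e.\ $k\geq 21$. So the threshold $k\geq 21$ falls out of a clean algebraic condition rather than a numerical verification. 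Your heuristic that one must go to order $1/(k\ln k)$ is about \emph{optimizing} the residual, but the target is only to keep it below $2$, and for that a suitable $c/k$ already suffices.
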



The result in \cite{GFE} is a concentration of measure result in the case of $n=2$. With a slight modification of the argument we can also recover such a result. Note that the set of all unit length tensors in $(\C^2)^{\otimes k}$ is just the (real) sphere $S^{2(2^k)-1} \subseteq (\C^2)^{\otimes k}$. In particular, the set of unit length tensors is a sphere which is compact, and has a standard measure on it.

\begin{corollary} \label{fraction}
Let $n = 2$, and $k \geq 4$. The fraction of unit length tensors $T$ in $(\C^n)^{\otimes k}$ such that 
$$
E(T) < k - \log_2(k) - \log_2(\ln(k)) - 3
$$
is at most $e^{-k}$. 
\end{corollary}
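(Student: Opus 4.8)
The plan is to translate the entanglement inequality into a tail bound for the spectral norm and then combine a bound on the typical size of $\|T\|_\sigma$ with concentration of measure on the sphere. First I would unwind $E(T)=-2\log_2(\|T\|_\sigma)$: writing $N=2^k$ for the complex dimension of $(\C^2)^{\otimes k}$, the event $E(T)<k-\log_2(k)-\log_2(\ln(k))-3$ is literally the event
\[ \|T\|_\sigma^2 > c^2 := 2^{-k}\cdot k\cdot \ln(k)\cdot 8 = \frac{8\,k\ln(k)}{N}, \]
using $2^{\log_2(k)}=k$, $2^{\log_2(\ln(k))}=\ln(k)$, $2^3=8$. Since (as noted in the excerpt) the unit tensors are exactly the sphere $S^{2N-1}\subset(\C^2)^{\otimes k}\cong\R^{2N}$ with its rotation-invariant probability measure, it suffices to show $\PP(\|T\|_\sigma>c)\leq e^{-k}$, a genuine concentration question.

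The two ingredients are (i) that $T\mapsto\|T\|_\sigma$ is $1$-Lipschitz, and (ii) a good upper bound on its median $m$. For (i): $\|\cdot\|_\sigma$ is a norm and $\|X\|_\sigma\leq\|X\|$ (pure unit tensors form a subset of all unit tensors), so $|\,\|S\|_\sigma-\|T\|_\sigma\,|\leq\|S-T\|_\sigma\leq\|S-T\|$, giving Lipschitz constant $1$ on $S^{2N-1}$. Levy's lemma, in the corrected form alluded to in the footnote, then gives a bound of the form $\PP(\|T\|_\sigma\geq m+t)\leq C_0\,e^{-(2N-1)t^2/2}$, whose exponent is $\approx N t^2$. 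Getting the exact constant here is precisely the point the footnote flags, so I would state and apply the sharp version carefully rather than the version used in \cite{GFE,FK}.

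For (ii) I would re-run the first-moment/net estimate underlying Theorem~\ref{main}. Fix an $\epsilon$-net $\mathcal N$ of the pure unit tensors built as a product of nets on each factor sphere in $\C^2\cong\R^4$, use the multiplicative comparison $\|T\|_\sigma\leq(1-k\sqrt{2\epsilon})^{-1}\max_{w\in\mathcal N}|\langle T,w\rangle|$, and combine the exact single-tensor tail $\PP(|\langle T,w\rangle|\geq s)=(1-s^2)^{N-1}$ with a union bound over $\mathcal N$. Choosing $\epsilon$ of order $1/k^2$ keeps the multiplicative factor bounded while keeping $\log|\mathcal N|$ of order $k\ln(k)$, which bounds $m$ (equivalently $\EE[\|T\|_\sigma]$) by a fixed fraction of $c$; in truth the mean is only of order $\sqrt{k/N}=o(c)$, so there is room to spare in principle. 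Finally I would assemble: with $t=c-m$ a definite fraction of $c$, the Levy bound gives $\PP(\|T\|_\sigma>c)\leq C_0\,e^{-(2N-1)t^2/2}\leq e^{-k}$ for $k\geq 4$.

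I expect the main obstacle to be step (ii), specifically the constants. A single net only gives $m\lesssim k/\sqrt N$, which is useless here because $c\sim\sqrt{k\ln(k)/N}$ lies below $k/\sqrt N$; one genuinely needs the sharper multiplicative-net (or a chaining) estimate to push $m$ under $c$. Moreover the net estimate places $m$ at the \emph{same order} $\sqrt{k\ln(k)/N}$ as $c$ itself, so the ratio $m/c$ is only a constant and must be optimized (through the choice of $\epsilon$, and through using the mean versus the median) to make the Levy exponent clear $e^{-k}$ all the way down to $k=4$; a chaining bound giving $m\sim\sqrt{k/N}$ would remove this tightness at the cost of a heavier estimate. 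Thus the real work is honest bookkeeping of the constants ($8$, the $-3$, the threshold $k\geq 4$) through both the net bound and the corrected concentration inequality.
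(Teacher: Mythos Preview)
Your approach is genuinely different from the paper's, and in fact it is essentially the Gross--Flammia--Eisert / Friedland--Kemp route that the paper is designed to bypass. The paper does \emph{not} invoke Levy's lemma at all. Instead it observes that the very ingredients you assemble for step~(ii)---the product net $\{v_{i_1}\otimes\cdots\otimes v_{i_k}\}$ of size $N^k\leq(4/\varepsilon)^k$, the comparison $|\langle T,v_{i_1}\otimes\cdots\otimes v_{i_k}\rangle|^2\geq(1-\varepsilon)^k\|T\|_\sigma^2$, and the exact tail $\PP(|\langle T,w\rangle|\geq s)=(1-s^2)^{2^k-1}$ from Corollary~\ref{frac}---already yield the tail of $\|T\|_\sigma$ directly, with no detour through the median. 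Concretely, $\{T:\|T\|_\sigma>D\}$ is covered by the $N^k$ balls $B\bigl(v_{i_1}\otimes\cdots\otimes v_{i_k},\,1-D^2(1-\varepsilon)^k\bigr)$, each of relative volume exactly $(1-D^2(1-\varepsilon)^k)^{2^k-1}$; hence $\PP(\|T\|_\sigma>D)\leq N^k(1-D^2(1-\varepsilon)^k)^{2^k-1}$. Setting this equal to $e^{-k}$, taking logarithms, and choosing $\varepsilon=1/(4k)$ gives the stated bound after short bookkeeping.

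The comparison is instructive. Your median$+$Levy scheme spends the net and union bound once to locate $m$, and then needs a \emph{second} concentration step to reach the tail; the paper spends the same net and union bound once and is finished, because $(1-s^2)^{2^k-1}$ is already an exponentially sharp tail centered at $0$ rather than at $m$. This is exactly why the paper improves GFE's $k-2\log_2 k-3$ to $k-\log_2 k-\log_2\ln k-3$: passing through Levy costs an extra $\log_2 k$. Your own diagnosis is correct that the single-net median sits at the same order $\sqrt{k\ln k/2^k}$ as the threshold $c$, so making your scheme reach the stated constant (and all the way down to $k\geq 4$) would require either chaining or very tight optimization; the paper's direct volume argument simply sidesteps that difficulty.
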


This improves significantly the bound of $k - 2\log_2(k) - 3$ in \cite[Theorem~2]{GFE}. Similar results can be shown for higher $n$ but we omit the details. So, most tensors have a high entanglement. Yet, finding an explicit\footnote{This can be given a formal definition, i.e., the usual one in complexity.} tensor with a high entanglement seems quite difficult --  this phenomenon is very familiar to researchers in computational complexity. For many complexity measures, one can show that most instances (or a random instance) has high complexity. However, constructing ``explicit" instances with high complexity is extremely challenging, and in many cases still unresolved. Example where explicit instances with high complexity are still far beyond current techniques include the celebrated P vs NP problem (and its algebraic analog VP vs VNP), tensor rank and Waring rank.

Loosely speaking, the geometric measure of entanglement is a measure of complexity. It is an interesting problem to construct explicit examples with a large entanglement.

\begin{problem}
Construct explicit tensors with large entanglement.
\end{problem}

We give a construction based on the determinants that constructs explicit tensors whose entanglement is quite large, but still falls short of the bounds in Corollary~\ref{cor:main} and Corollary~\ref{fraction}. We define the determinant tensor 
$$
{\rm det}_n = \sum_{\pi \in S_n} e_{\pi(1)} \otimes e_{\pi(2)} \otimes \dots \otimes e_{\pi(n)} \in (\C^n)^{\otimes n}.
$$

Consider the tensor $\det_{n^p} \in (\C^{n^p})^{n^p}$. Identifying $\C^{n^p}$ with $(\C^n)^{\otimes p}$, we can think of $\det_{n^p}$ as a tensor in $(\C^n)^{\otimes pn^p}.$ We define 
$$
T_{n,p} = \frac{1}{\sqrt{n^p !}} {\rm det}_{n^p} \in (\C^n)^{\otimes pn^p}.
$$

Since $||\det_{n^p}|| = \sqrt{{n^p}!}$, we have $||T_{n,p}|| = 1$. 

\begin{proposition} \label{explicit}
Let $n$ be fixed. For the unit length tensor $T_{p,n} \in (\C^n)^{\otimes k}$, with $k = pn^p$, we have 
$$
E(T_{n,p}) \geq k\log_2(n) - o(k).
$$
 
\end{proposition}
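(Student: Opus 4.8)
The plan is to bound the spectral norm $\|T_{n,p}\|_\sigma$ from above by $1/\sqrt{N!}$, where $N=n^p$, and then read off the entanglement by taking logarithms. Write $k=pN$ and recall $T_{n,p}=\frac{1}{\sqrt{N!}}\det_{N}$ under the identification $\C^{N}\cong(\C^n)^{\otimes p}$. After grouping the $k=pN$ factors of $(\C^n)^{\otimes k}$ into $N$ consecutive blocks of $p$ factors, every unit pure tensor $w_1\otimes\cdots\otimes w_k$ becomes, in particular, a unit pure tensor $u=u_1\otimes\cdots\otimes u_N$ in $(\C^{N})^{\otimes N}$ with each $u_i\in\C^{N}$ of unit length. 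Since the fine pure tensors form a subset of the coarse ones, $\|T_{n,p}\|_\sigma$ is bounded above by the supremum of $|\langle\det_{N},u\rangle|/\sqrt{N!}$ over all such $u$; in other words the coarse bound already suffices and we need not exploit the finer structure.

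The key observation is that this pairing is an honest determinant. Indeed, using the antisymmetry of $\det_{N}=\sum_{\pi\in S_N}\sgn(\pi)\,e_{\pi(1)}\otimes\cdots\otimes e_{\pi(N)}$,
$$
\langle\det_{N},u\rangle=\sum_{\pi\in S_N}\sgn(\pi)\prod_{i=1}^N\langle e_{\pi(i)},u_i\rangle=\overline{\det(M)},
$$
where $M$ is the $N\times N$ matrix whose $i$-th row is $u_i$. Hadamard's inequality now gives
$$
|\langle\det_{N},u\rangle|=|\det(M)|\le\prod_{i=1}^N\|u_i\|=1,
$$
and this is attained (take $M=I$). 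Hence $\|T_{n,p}\|_\sigma\le 1/\sqrt{N!}$.

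Finally, $E(T_{n,p})=-2\log_2\|T_{n,p}\|_\sigma\ge\log_2(N!)$. By Stirling,
$$
\log_2(N!)=N\log_2 N-N\log_2 e+O(\log N)=k\log_2 n-o(k),
$$
since $N\log_2 N=pN\log_2 n=k\log_2 n$ while the error $N\log_2 e+O(\log N)=O(k/p)=o(k)$ as $p\to\infty$ with $n$ fixed. This proves the claim, and the bound is consistent with (and, up to the lower-order term, matches) the upper bound $(k-1)\log_2 n$ of Proposition~\ref{upper bounds}.

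The argument is short, so there is no serious computational obstacle; the conceptual crux is the reduction of the pairing to a determinant, which is exactly where the antisymmetry of $\det_N$ is essential. For the symmetric (permanent) analogue the all-equal product state $u_1=\cdots=u_N=(\tfrac{1}{\sqrt n}\mathbf 1_n)^{\otimes p}$ would yield $|\langle\cdot,u\rangle|=N!\,N^{-N/2}$, making the spectral norm large and the estimate fail, so the sign is what drives the high entanglement. The only remaining point to get right is the verification that the Stirling error is genuinely $o(k)$; note moreover that passing to the finer tensor structure can only decrease the spectral norm, so exploiting it could at best improve this lower-order term.
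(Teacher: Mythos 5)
Your proof is correct and follows the same overall route as the paper: identify $\C^{n^p}$ with $(\C^n)^{\otimes p}$, observe that passing to the finer tensor structure cannot increase the spectral norm, conclude $\|T_{n,p}\|_\sigma\le 1/\sqrt{n^p!}$, and finish with Stirling. The one genuine difference is that where the paper simply cites \cite{Derksen} for $\|\det_{n^p}\|_\sigma=1$, you prove the needed inequality $\|\det_N\|_\sigma\le 1$ directly by recognizing the pairing with a coarse pure tensor as $\det(M)$ and applying Hadamard's inequality; this makes the argument self-contained, and you correctly note that only the upper bound (not the exact value, which the paper also establishes by exhibiting an attaining pure tensor) is needed for a lower bound on $E$. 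You also quietly repair a typo: the paper's displayed definition of $\det_n$ omits the factor $\sgn(\pi)$, without which the tensor is the permanent tensor and the proposition fails, as your closing remark about the all-equal product state makes precise --- so your version with the sign is the intended one.
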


Let us observe that Theorem~\ref{main} is valid as long as $n,k \geq 2$. So, one can also consider the situation when $k$ is fixed. In this case, $\ln(\frac{4}{\varepsilon})/(1-\varepsilon)^k$ is simply a constant for any $0< \varepsilon < 1$. One can optimize the value of this constant by choosing an appropriate $\varepsilon$, but we omit that analysis. We obtain:

\begin{corollary} \label{conc.meas}
For fixed $k$, we have $$E_{\max} \geq (k-1)\log_2(n) + O(1).$$
\end{corollary}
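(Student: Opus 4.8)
The plan is to derive this directly from Theorem~\ref{main} by converting the upper bound on the minimum spectral norm $C$ into a lower bound on $E_{\max}$, and then extracting the dominant $n$-dependence with an elementary geometric-series estimate. The starting observation is that, since the map $t \mapsto -2\log_2(t)$ is decreasing, the maximum entanglement is attained at the minimizer of the spectral norm; that is,
\begin{equation*}
E_{\max} = \max_{\|T\|=1}\bigl(-2\log_2\|T\|_\sigma\bigr) = -2\log_2\Bigl(\min_{\|T\|=1}\|T\|_\sigma\Bigr) = -2\log_2(C) = -\log_2(C^2),
\end{equation*}
where $C$ is exactly the quantity controlled by Theorem~\ref{main}.

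Next I would fix a convenient value of the free parameter, say $\varepsilon = \tfrac12$, so that the factor $c_k := \ln(4/\varepsilon)/(1-\varepsilon)^k$ becomes a constant depending only on the fixed integer $k$. Substituting this choice into Theorem~\ref{main} gives
\begin{equation*}
C^2 \leq \frac{k(n-1)\,c_k}{\,n^k-1\,}.
\end{equation*}
The key simplification is the identity $\dfrac{n^k-1}{n-1} = 1 + n + \cdots + n^{k-1} \geq n^{k-1}$, which isolates the leading power of $n$ and yields $C^2 \leq k\,c_k\,n^{-(k-1)}$.

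Taking base-$2$ logarithms and negating then produces
\begin{equation*}
E_{\max} = -\log_2(C^2) \geq (k-1)\log_2(n) - \log_2\!\bigl(k\,c_k\bigr).
\end{equation*}
Because $k$ is held fixed, the subtracted term $\log_2(k\,c_k)$ is a constant uniform in $n$, which is precisely the asserted $O(1)$. I expect no genuine obstacle here: the argument is a routine substitution followed by the geometric-series bound. The only point that merits a moment's care is confirming that the hidden constant is permitted to depend on $k$ (it is, since $k$ is fixed) while remaining independent of $n$, which the final display makes explicit for every $n \geq 2$.
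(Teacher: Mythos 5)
Your argument is correct and is essentially the same as the paper's (the paper only sketches it in the paragraph preceding the corollary: fix any $\varepsilon\in(0,1)$ so that $\ln(4/\varepsilon)/(1-\varepsilon)^k$ is a constant, then apply Theorem~\ref{main}). Your added detail of bounding $(n^k-1)/(n-1)\geq n^{k-1}$ to isolate the $(k-1)\log_2(n)$ term matches what the paper does in the proof of Corollary~\ref{cor:main}.
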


\subsection{Symmetric tensors}
There is a natural action of $\Sigma_m$, the symmetric group on $m$ letters on $(\C^n)^{\otimes m}$ by permuting the tensor factors. A tensor $T \in (\C^n)^{\otimes m}$ is called a symmetric tensor if it is invariant under the action of $\Sigma_m$. We consider the space of symmetric tensors $\Sm^m(\C^n) \subseteq (\C^n)^{\otimes m}$. One can identify in a standard way $\Sm^m(\C^n)$ with polynomials of degree $m$ in $n$ variables. 

Symmetric tensors cannot have very large entanglement as was shown in \cite{FK}. We restrict our attention to symmetric tensors of norm $1$, which we denote by $\Sm_1^m (\C^n)$.

\begin{theorem} [\cite{FK}]
For all $T \in \Sm_1^m(\C^n)$, we have 
$$
E(T) \leq \log_2(d_{n,m}),
$$
where $d_{n,m} = {m+n-1 \choose m}$.
\end{theorem}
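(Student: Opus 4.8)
The plan is to translate the entanglement bound into a lower bound on the spectral norm and then establish that bound by a second-moment (averaging) argument. Since $E(T) = -2\log_2(\|T\|_\sigma)$, the claimed inequality $E(T) \le \log_2(d_{n,m})$ is equivalent to $\|T\|_\sigma \ge d_{n,m}^{-1/2}$ for every unit symmetric tensor $T$. So the entire problem reduces to producing a universal lower bound of $1/\sqrt{d_{n,m}}$ on the spectral norm of a symmetric tensor of norm $1$.

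First I would restrict attention to the symmetric pure tensors $v^{\otimes m}$ with $\|v\|=1$. Each such $v^{\otimes m}$ is a unit pure tensor, so by Definition~\ref{def1} we immediately get $\|T\|_\sigma \ge \max_{\|v\|=1} |\langle T, v^{\otimes m}\rangle|$. Note that this sidesteps the (true but harder) fact that for symmetric $T$ the spectral norm is actually attained at a symmetric pure tensor: for a lower bound we only need that the $v^{\otimes m}$ are among the competitors in the definition of $\|T\|_\sigma$.

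The key step is to bound $\max_{\|v\|=1}|\langle T, v^{\otimes m}\rangle|^2$ from below by its average against the uniform probability measure $\mu$ on the unit sphere of $\C^n$. Consider the positive operator $P = \int_{\|v\|=1}(v^{\otimes m})(v^{\otimes m})^\ast \, d\mu(v)$ acting on $\Sm^m(\C^n)$. The unitary group $U(n)$ acts irreducibly on $\Sm^m(\C^n)$ and fixes $\mu$, so by Schur's lemma $P$ is a scalar multiple of the identity; taking the trace and using $\|v^{\otimes m}\|^2 = \|v\|^{2m} = 1$ gives $\trace(P) = 1$, whence $P = \frac{1}{d_{n,m}}\operatorname{Id}$ because $\dim \Sm^m(\C^n) = d_{n,m}$. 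Therefore
$$
\int_{\|v\|=1} |\langle T, v^{\otimes m}\rangle|^2 \, d\mu(v) = \langle T, P T\rangle = \frac{\|T\|^2}{d_{n,m}} = \frac{1}{d_{n,m}}.
$$
Since the maximum of a continuous function over the compact sphere is at least its average, we conclude $\|T\|_\sigma^2 \ge \max_{\|v\|=1}|\langle T, v^{\otimes m}\rangle|^2 \ge 1/d_{n,m}$, which is exactly the desired bound.

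The only genuine obstacle is the evaluation of the frame operator $P$. I expect the cleanest route to be the representation-theoretic one above (irreducibility of the symmetric power together with Schur's lemma); the alternative of computing $\int |\langle T, v^{\otimes m}\rangle|^2 \, d\mu$ directly via Dirichlet-type monomial integrals over the sphere also works but is messier and obscures the role of $d_{n,m}$ as a dimension. Everything else — the reduction to the spectral norm, the inclusion of the $v^{\otimes m}$ among the pure tensors, and the principle that the maximum dominates the average — is essentially formal.
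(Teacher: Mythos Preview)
Your argument is correct. However, the paper does not give its own proof of this theorem: it is quoted as a result of Friedland and Kemp \cite{FK} and no argument is supplied, so there is nothing in the paper to compare your proposal against. Your averaging approach via Schur's lemma---showing that $\int_{\|v\|=1} (v^{\otimes m})(v^{\otimes m})^\ast\,d\mu(v) = d_{n,m}^{-1}\operatorname{Id}$ on $\Sm^m(\C^n)$ and then bounding the maximum of $|\langle T,v^{\otimes m}\rangle|^2$ by its mean---is a clean and self-contained way to obtain $\|T\|_\sigma \ge d_{n,m}^{-1/2}$, which is exactly the inequality equivalent to $E(T)\le \log_2(d_{n,m})$.
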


For fixed $n$, the existence of tensors $T \in  \Sm_1^m(\C^n)$ with $E(T) \geq \log_2(d_{n,m}) - \log_2(\log_2 d_{n,m}) - O(1)$ for $m$ sufficiently large is also shown in \cite{FK}. Our techniques can be extended to this case and yield improvements. 
Let $d_{n,m} = {m+n-1 \choose m}$. 

\begin{theorem} \label{theo:sym}
Let $C_s = \min\{||T||_\sigma \ |\ T \in \Sm_1^m(\C^n)\}$. Then for any $0 < \varepsilon < 1$, we have
$$
C^2_s \leq m^2 \varepsilon^2 + \frac{(n-1) \ln(4/\varepsilon)}{d_{n,m}-1}.
$$ 
\end{theorem}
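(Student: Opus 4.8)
The plan is to adapt the probabilistic covering argument behind Theorem~\ref{main} to the symmetric setting, replacing the Segre variety of pure tensors by the Veronese variety $\{v^{\otimes m} : \|v\|=1\}$ of pure symmetric tensors. The crucial structural input I would use first is a classical theorem of Banach: for a symmetric tensor the spectral norm is already attained on symmetric pure tensors, so that for $T \in \Sm^m(\C^n)$ one has $\|T\|_\sigma = \max_{\|v\|=1}|\langle T, v^{\otimes m}\rangle|$. This cuts the effective parameter space down to the unit sphere of $\C^n$, which is ultimately what produces the factor $(n-1)$ in the numerator rather than the full ambient dimension $d_{n,m}$.

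Next I would set up the random model exactly as in Theorem~\ref{main}. Let $T$ be distributed uniformly on the unit sphere of the Hilbert space $\Sm^m(\C^n)\cong\C^{d_{n,m}}$. For a fixed unit vector $v$, the tensor $v^{\otimes m}$ is again a unit vector of this space, so $|\langle T, v^{\otimes m}\rangle|^2$ is $\mathrm{Beta}(1, d_{n,m}-1)$ distributed and
$$\PP\big(|\langle T, v^{\otimes m}\rangle|^2 > s\big) = (1-s)^{d_{n,m}-1}.$$
I would then cover the unit sphere of $\C^n$ by Fubini--Study caps $\{v : |\langle v, v'\rangle|^2 \ge 1-\gamma\}$; each such cap has normalized measure exactly $\gamma^{n-1}$, so a maximal separated family yields a net $\mathcal{N}$ of size $O(\gamma^{-(n-1)})$. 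A union bound over $\mathcal{N}$ against the Beta tail shows that, once $s(d_{n,m}-1) > \ln|\mathcal{N}|\approx (n-1)\ln(4/\gamma)$, there exists a unit tensor $T$ with $|\langle T, v'^{\otimes m}\rangle|^2 \le s$ for every net point $v'$. This is the source of the main term $\tfrac{(n-1)\ln(4/\varepsilon)}{d_{n,m}-1}$. The remaining correction comes from passing back to the continuum: the telescoping identity $v^{\otimes m}-v'^{\otimes m}=\sum_{i=1}^m v^{\otimes(i-1)}\otimes(v-v')\otimes v'^{\otimes(m-i)}$ gives the Lipschitz estimate $\|v^{\otimes m}-v'^{\otimes m}\|\le m\|v-v'\|$, and the factor $m$ is what eventually produces the term $m^2\varepsilon^2$.

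The main obstacle is reconciling these two contributions at a single scale. The complex-cap covering that yields the exponent $n-1$ is naturally parametrized by the inner-product radius $\gamma$ (a cap of overlap $1-\gamma$ has measure $\gamma^{n-1}$), whereas the Lipschitz control of $v\mapsto v^{\otimes m}$ is naturally parametrized by the Euclidean radius $\|v-v'\|$; because of the $m$-th power map these two scales differ, and matching them crudely threatens either to double the exponent (turning $n-1$ into $2(n-1)$, as a real $\varepsilon$-net would give) or to degrade the correction into a term linear in $\varepsilon$ rather than the stated $m^2\varepsilon^2$. To land exactly on $m^2\varepsilon^2+\tfrac{(n-1)\ln(4/\varepsilon)}{d_{n,m}-1}$ one must therefore control the net-to-continuum passage at the level of the squared functional $|\langle T, v^{\otimes m}\rangle|^2$ --- plausibly through a Bernstein--Markov gradient bound for the degree-$m$ polynomial $v\mapsto\langle T, v^{\otimes m}\rangle$ on the sphere --- and track the numerical constant carefully enough to extract the ``$4$'' inside the logarithm while avoiding the cross term $2m\varepsilon\sqrt{s}$ that a naive triangle inequality would create. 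This constant bookkeeping, together with the invocation of the symmetric spectral-norm identity, is where I expect the real work to lie; the probabilistic skeleton is otherwise a direct transcription of Theorem~\ref{main}.
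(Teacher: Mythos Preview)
Your skeleton is exactly the paper's: invoke Banach's theorem to reduce to pure tensors $v^{\otimes m}$, take an $\varepsilon$-cover $v_1,\dots,v_N$ of $S^{2n-1}$ with $N\le(4/\varepsilon)^{n-1}$, and then compare $N$ against the volume of caps in $S^{2d_{n,m}-1}$. Your probabilistic phrasing (Beta tail plus union bound) is simply the contrapositive of the paper's covering statement $N\cdot(1-(C_s^2-m^2\varepsilon^2))^{d_{n,m}-1}\ge 1$, so there is no real difference there.

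Where you diverge from the paper is precisely the step you flag as the obstacle, and the paper's resolution is much simpler than the Bernstein--Markov route you contemplate. The paper never passes through the Euclidean Lipschitz estimate $\|v^{\otimes m}-v'^{\otimes m}\|\le m\|v-v'\|$ at all, and so never faces the mismatch between inner-product radius and Euclidean radius. Instead it exploits multiplicativity directly: if $|\langle v,v_I\rangle|^2\ge 1-\varepsilon$ then $|\langle v^{\otimes m},v_I^{\otimes m}\rangle|^2=|\langle v,v_I\rangle|^{2m}\ge(1-\varepsilon)^m\ge 1-m\varepsilon$. Assuming without loss of generality $v_I=e_n$, one splits $v^{\otimes m}$ orthogonally as its $e_n^{\otimes m}$-component plus a remainder $\widetilde{v^{\otimes m}}$ with $\|\widetilde{v^{\otimes m}}\|^2\le m\varepsilon$, and then bounds $|\langle T,v^{\otimes m}\rangle|^2$ by the sum of $|\langle T,v_I^{\otimes m}\rangle|^2$ and the small orthogonal contribution. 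Because the whole computation stays in the Fubini--Study parametrization on both the base sphere and the tensor sphere, the cap exponent remains $n-1$ and no cross term $2m\varepsilon\sqrt{s}$ ever appears; the additive correction falls out of the orthogonal split rather than from a triangle inequality on norms. This is the one idea your proposal is missing.
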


Let us define 
$$
E^s_{\max} = \max\{ E(T)\ |\ T \in \Sm_1^m(\C^n)\}.
$$

\begin{corollary} \label{sym:main}
Let $n \geq 2$ be fixed. Then for $m$ sufficiently large, we have 
$$
E^s_{\max} \geq \log_2(d_{n,m}) - \log_2(\ln(d_{n,m})) - \log_2(n).
$$
\end{corollary}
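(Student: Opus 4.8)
The plan is to convert the statement into an upper bound on $C_s^2$ and then feed a well-chosen $\varepsilon$ into Theorem~\ref{theo:sym}. Since $E(T) = -2\log_2(\|T\|_\sigma) = -\log_2(\|T\|_\sigma^2)$ is a decreasing function of $\|T\|_\sigma$, the maximum $E^s_{\max}$ over $T \in \Sm_1^m(\C^n)$ is attained at a tensor minimizing the spectral norm, so $E^s_{\max} = -\log_2(C_s^2)$. Consequently it suffices to prove that, for $m$ sufficiently large,
$$
C_s^2 \leq \frac{n\,\ln(d_{n,m})}{d_{n,m}},
$$
since taking $-\log_2$ of both sides gives exactly $E^s_{\max} \geq \log_2(d_{n,m}) - \log_2(\ln(d_{n,m})) - \log_2(n)$.

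To produce this bound I would apply Theorem~\ref{theo:sym}, whose right-hand side splits into a term $m^2\varepsilon^2$ and a term $\frac{(n-1)\ln(4/\varepsilon)}{d_{n,m}-1}$. The crux is the choice of $\varepsilon$: it must be small enough that $m^2\varepsilon^2$ is negligible next to $\frac{\ln(d_{n,m})}{d_{n,m}}$, yet large enough that $\ln(4/\varepsilon)$ does not inflate the second term past $\frac{n\ln(d_{n,m})}{d_{n,m}}$. The obvious guess $\varepsilon \asymp 1/d_{n,m}$ in fact fails, because then $\ln(4/\varepsilon) \sim \ln(d_{n,m}) + \ln m$ is too large. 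Recalling that for fixed $n$ one has $d_{n,m} = \binom{m+n-1}{m}$ of polynomial growth, so that $\ln(d_{n,m}) \sim (n-1)\ln m$, I would instead take
$$
\varepsilon = \frac{1}{m\sqrt{d_{n,m}}}.
$$
With this choice the first term is exactly $m^2\varepsilon^2 = 1/d_{n,m}$, of smaller order than the target, and $\ln(4/\varepsilon) = \ln 4 + \ln m + \tfrac12\ln(d_{n,m})$.

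It then remains to verify the inequality asymptotically. After multiplying through by $d_{n,m}$ and using $d_{n,m}/(d_{n,m}-1)\to 1$, the second term contributes essentially $(n-1)\big(\ln 4 + \ln m + \tfrac12\ln(d_{n,m})\big)$, so the required estimate reduces to
$$
(n-1)\ln m + \tfrac{n-1}{2}\ln(d_{n,m}) + O(1) \leq n\ln(d_{n,m}).
$$
Substituting $\ln m \sim \ln(d_{n,m})/(n-1)$ turns the leading terms into $\tfrac{n+1}{2}\ln(d_{n,m}) \leq n\ln(d_{n,m})$, which holds with slack $\tfrac{n-1}{2}\ln(d_{n,m})$; since $n \geq 2$ this slack grows without bound and absorbs the $O(1)$ error once $m$ is large. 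This last asymptotic comparison is the main (and essentially only) obstacle: it is precisely where the exact constant $\log_2(n)$ of the corollary is produced, so one must keep the $\tfrac12\ln(d_{n,m})$ contribution and the relation $\ln(d_{n,m}) \sim (n-1)\ln m$ under careful control rather than discarding them.
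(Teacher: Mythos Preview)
Your proposal is correct and follows essentially the same route as the paper: the paper also substitutes $\varepsilon = \delta/m$ with $\delta = (d_{n,m}-1)^{-1/2}$ (your $\varepsilon = 1/(m\sqrt{d_{n,m}})$ differs only cosmetically), and then reduces the claim to the asymptotic inequality $(n-1)\ln m + \tfrac{n-1}{2}\ln(d_{n,m}) + O(1) \leq n\ln(d_{n,m})$, verified via $\ln(d_{n,m}) \sim (n-1)\ln m$ exactly as you do. (Your aside that $\varepsilon \asymp 1/d_{n,m}$ fails because $\ln(4/\varepsilon)$ picks up a $\ln m$ term is slightly garbled---the real obstruction there is the $m^2\varepsilon^2$ term when $n=2$---but this does not affect the main argument.)
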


For $n= 2$, we obtain a slightly stronger result.
\begin{corollary} \label{sym:qubit}
Let $n=2$. Then for all $m$, we have
$$
E^s_{\max} \geq \log_2(m) - \log_2(1 + \ln(4m\sqrt{m})).
$$
\end{corollary}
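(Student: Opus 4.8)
The plan is to read the corollary off directly from Theorem~\ref{theo:sym} by specializing $n=2$ and making a judicious choice of $\varepsilon$. First I would record the translation between the two quantities involved. Since $E(T) = -2\log_2(\|T\|_\sigma)$ and $C_s$ is the minimum of $\|T\|_\sigma$ over $\Sm_1^m(\C^2)$, the maximum entanglement satisfies $E^s_{\max} = -2\log_2(C_s) = -\log_2(C_s^2)$. Thus it suffices to establish the upper bound
$$C_s^2 \leq \frac{1 + \ln(4m\sqrt{m})}{m},$$
because applying $-\log_2$ to both sides yields exactly $E^s_{\max} \geq \log_2(m) - \log_2(1 + \ln(4m\sqrt{m}))$.

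To obtain this bound I would specialize Theorem~\ref{theo:sym} to $n=2$. Here $d_{2,m} = \binom{m+1}{m} = m+1$, so $d_{2,m}-1 = m$ and $n-1 = 1$, and the theorem reduces to
$$C_s^2 \leq m^2\varepsilon^2 + \frac{\ln(4/\varepsilon)}{m}$$
for every $0 < \varepsilon < 1$. The remaining freedom is the choice of $\varepsilon$, and the clean closed form in the corollary emerges by forcing the first term to equal $1/m$: setting $m^2\varepsilon^2 = 1/m$ gives $\varepsilon = m^{-3/2}$. With this choice the first term is $1/m$ and the second is $\ln(4m^{3/2})/m = \ln(4m\sqrt{m})/m$, so the two terms combine to give precisely $(1 + \ln(4m\sqrt{m}))/m$, as required.

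The only caveat is the admissibility constraint $0 < \varepsilon < 1$: for $\varepsilon = m^{-3/2}$ this holds exactly when $m \geq 2$, so the substitution covers all $m \geq 2$. For $m=1$ every norm-one symmetric tensor is pure and the right-hand side of the claimed inequality is negative, so the bound holds trivially from $E(T) \geq 0$. I do not expect any genuine obstacle here, since once Theorem~\ref{theo:sym} is granted the corollary is a one-line optimization; the only creative step is reverse-engineering the value of $\varepsilon$ from the target expression, and the slight improvement over the general-$n$ statement of Corollary~\ref{sym:main} comes from retaining the $m^2\varepsilon^2$ term explicitly and balancing it against the desired leading constant $1/m$.
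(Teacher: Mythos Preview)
Your proof is correct and essentially identical to the paper's: both specialize Theorem~\ref{theo:sym} to $n=2$ (so $d_{2,m}-1=m$) and choose $\varepsilon=m^{-3/2}$ (the paper writes this as $\varepsilon=\delta/m$ with $\delta=1/\sqrt{m}$). You are in fact slightly more careful, as the paper does not address the admissibility constraint $0<\varepsilon<1$ or the trivial case $m=1$.
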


The strategy in \cite{FK} closely resembles that of \cite{GFE}. One significant difficulty they must overcome is to construct `$\epsilon$-nets'. We note that our methods bypass the need for this construction. Just like the results in \cite{GFE}, the results in \cite{FK} are stronger than just showing the existence of tensors with large entanglement. They show a (quantitative) concentration of measure result that says that most symmetric tensors are maximally entangled, see \cite[Theorem~1.2]{FK}. With our techniques, we can improve upon these results as well by pursuing a strategy similar to Corollary~\ref{fraction}, but we omit the details.

\section{Preliminaries}
Let $S^{n} \subset \R^{n+1}$ denote the $n$-sphere. Observe that the set of unit vectors in $\C^n = \R^{2n}$ is simply $S^{2n-1}$. 
So, the set of unit tensors in $(\C^n)^{\otimes k} = \C^{n^k}$ is just the sphere $S^{2n^k - 1}$. 

\begin{definition}
For $v \in S^{2n-1}$, i.e., $v$ of unit length in $\C^n$, we define 
$$B(v,\varepsilon) = \{w \in S^{2n-1}  | \ |\langle v,w \rangle |^2 \geq 1-\varepsilon\} \subseteq S^{2n-1}.$$ We call $B(v,\varepsilon)$ the $\varepsilon$-ball around $v$.
\end{definition}

\begin{remark}
Note that the $\varepsilon$-ball as defined above is not a ball in the usual metric. However, it is better suited for our purposes since the spectral norm is defined in terms of inner products. 
\end{remark}

\begin{proposition} \label{vol.b}
$$\vol(B(v,\varepsilon)) = \displaystyle \frac{ 2 \pi^n \varepsilon^{n-1}}{(n-1)!}.$$
\end{proposition}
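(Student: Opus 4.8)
The plan is to exploit unitary invariance to fix $v$, reduce the volume to a one-dimensional integral via an explicit fibration of the sphere over its first coordinate, and then evaluate that integral. First, the surface measure on $S^{2n-1}\subset\C^n$ is invariant under the unitary group $U(n)$, which acts transitively on unit vectors and preserves $|\langle v,w\rangle|$. Hence $\vol(B(v,\varepsilon))$ is independent of $v$, and I may take $v=e_1$. Then $|\langle v,w\rangle|^2=|w_1|^2$, so
$$B(e_1,\varepsilon)=\{\,w\in S^{2n-1}:|w_1|^2\geq 1-\varepsilon\,\}=\{\,w:\textstyle\sum_{j\geq 2}|w_j|^2\leq\varepsilon\,\}.$$

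Next I fiber the sphere over its first coordinate. Write $w=(w_1,w')$ with $w_1=z\in\C$ ranging over the unit disk and $w'=\rho\,u$, where $\rho=\sqrt{1-|z|^2}$ and $u\in S^{2n-3}\subset\C^{n-1}$. The main step is to compute the surface-area element in the coordinates $(z,u)$. Writing $z=x+iy$ and differentiating $w=(x,y,\rho u)$, one checks that the induced Riemannian metric is block diagonal: the $z$-block and the $u$-block are orthogonal because the $u$-variations are tangent to $S^{2n-3}$ and hence perpendicular to $u$. A short determinant computation shows that the square root of the determinant of the $z$-block equals $\rho^{-1}$ (using $|z|^2+\rho^2=1$), while that of the $u$-block equals $\rho^{2n-3}$ times the area element of $S^{2n-3}$. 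Therefore
$$dS_{S^{2n-1}}=\rho^{\,2n-4}\,dx\,dy\,dS_{S^{2n-3}}.$$
This Jacobian computation is the main obstacle; the rest is bookkeeping.

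Finally I integrate. Since the constraint $|z|^2\geq 1-\varepsilon$ does not involve $u$, the $u$-integration contributes the full area $\vol(S^{2n-3})=2\pi^{n-1}/(n-2)!$. For the disk integral, pass to polar coordinates $z=re^{i\theta}$ and substitute $s=r^2$, then $t=1-s$:
$$\int_{|z|^2\geq 1-\varepsilon}\rho^{\,2n-4}\,dx\,dy=2\pi\int_{\sqrt{1-\varepsilon}}^{1}(1-r^2)^{n-2}\,r\,dr=\pi\int_{0}^{\varepsilon}t^{\,n-2}\,dt=\frac{\pi\,\varepsilon^{\,n-1}}{n-1}.$$
Multiplying the two factors gives $\vol(B(v,\varepsilon))=\frac{2\pi^{n-1}}{(n-2)!}\cdot\frac{\pi\varepsilon^{n-1}}{n-1}=\frac{2\pi^n\varepsilon^{n-1}}{(n-1)!}$, as claimed. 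As a sanity check, $\varepsilon=1$ recovers the full volume $2\pi^n/(n-1)!$ of $S^{2n-1}$, consistent with $B(v,1)=S^{2n-1}$.

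I would also note the probabilistic reading that guides the computation: for $w$ uniform on $S^{2n-1}$, the vector $(|w_1|^2,\dots,|w_n|^2)$ is uniform on the standard simplex, so $\PP(|w_1|^2\geq 1-\varepsilon)$ is the relative volume of a copy of the simplex scaled by $\varepsilon$ in its $n-1$ free directions, namely $\varepsilon^{\,n-1}$. This matches the ratio $\vol(B(v,\varepsilon))/\vol(S^{2n-1})$ obtained above and explains why the answer factors so cleanly.
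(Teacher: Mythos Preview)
Your proof is correct and takes a genuinely different route from the paper's. The paper fixes $v=e_n$, parametrizes the upper half of $B(v,\varepsilon)$ as the graph $y_n=\sqrt{1-x_n^2-R^2}$ over the region $\{R^2\le\varepsilon,\ |x_n|\le\sqrt{1-R^2}\}$, uses the classical surface-area integrand $\sqrt{1+|\nabla f|^2}=1/\sqrt{1-x_n^2-R^2}$, integrates out the single real variable $x_n$ to get $\pi$, and then multiplies by the Euclidean volume of the $(2n-2)$-disk of radius $\sqrt{\varepsilon}$. You instead fiber $S^{2n-1}$ over the first \emph{complex} coordinate with fiber a scaled $S^{2n-3}$, compute the Riemannian volume element $\rho^{2n-4}\,dx\,dy\,dS_{S^{2n-3}}$ via a block-diagonal Gram matrix (using $\xi\perp u$), and reduce to a one-variable radial integral. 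Your approach is a bit more structural (it exploits the $U(1)\times U(n-1)$ symmetry and the known lower-sphere volume $\mathrm{vol}(S^{2n-3})=2\pi^{n-1}/(n-2)!$) and yields Corollary~2.4 directly via the simplex observation at the end; the paper's graph parametrization is more elementary in that it uses only the undergraduate surface-area formula and avoids appealing to the volume of a lower-dimensional sphere. Both are short and standard; the key Jacobian step in yours (the $z$-block has determinant $(\rho^2+x^2+y^2)/\rho^2=1/\rho^2$) is indeed the short computation you claim.
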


Before proving the proposition, let us observe that $B(v,1)$ is the entire sphere $S^{2n-1}$. The volume of the sphere is usually calculated by a recursion. As a sanity check, when we plug in $\varepsilon = 1$ in the above formula, we do recover the (well known) volume of the sphere.

\begin{corollary} \label{vol.s} We have
$$\vol(S^{2n-1}) =  \vol(B(v,1)) =  \displaystyle \frac{2 \pi^n}{(n-1)!}$$
\end{corollary}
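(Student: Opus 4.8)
The plan is to exploit the unitary invariance of the surface measure on $S^{2n-1}$ to reduce to a single explicit vector, and then integrate the measure via a convenient fibration of the sphere. First I would observe that the Hermitian form is invariant under the unitary group, which acts transitively on the unit sphere and preserves surface measure; hence $\vol(B(v,\varepsilon))$ is independent of $v$, and I may take $v = e_1$. Then $|\langle v,w\rangle|^2 = |w_1|^2$, so $B(e_1,\varepsilon) = \{w \in S^{2n-1} : |w_1|^2 \geq 1-\varepsilon\}$, turning the statement into a concrete volume computation for the region of the sphere where the first complex coordinate is large.

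Next I would decompose the sphere by the projection $w = (w_1,w') \mapsto w_1$ onto the first complex coordinate, writing $a = w_1 \in \C = \R^2$ and $w' \in \C^{n-1} = \R^{2n-2}$. For $|a| < 1$ the fiber is the sphere of radius $\sqrt{1-|a|^2}$ in $\R^{2n-2}$, a scaled copy of $S^{2n-3}$, so I parametrize $w = (a, \sqrt{1-|a|^2}\,\omega)$ with $\omega \in S^{2n-3}$. Computing the first fundamental form in these coordinates, the metric splits as a block diagonal (the cross terms vanish because the tangent vectors to $S^{2n-3}$ are orthogonal to $\omega$): an $a$-block $I_2 + (1-|a|^2)^{-1} a a^{\mathsf T}$ of determinant $(1-|a|^2)^{-1}$, and an $\omega$-block equal to the metric of $S^{2n-3}$ scaled by $(1-|a|^2)$, of determinant $(1-|a|^2)^{2n-3}\det g$. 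Taking the square root of the product yields the measure factorization
$$d\sigma_{S^{2n-1}} = (1-|a|^2)^{n-2}\, da\; d\sigma_{S^{2n-3}}(\omega).$$

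With this factorization the remaining computation is routine. Integrating over $\{|a|^2 \geq 1-\varepsilon\}$ in polar coordinates $a = (s\cos\psi, s\sin\psi)$ and substituting $u = 1-s^2$ gives
$$\int_{\{|a|^2 \geq 1-\varepsilon\}} (1-|a|^2)^{n-2}\, da = 2\pi \int_{\sqrt{1-\varepsilon}}^{1} (1-s^2)^{n-2} s\, ds = \frac{\pi\,\varepsilon^{n-1}}{n-1},$$
so that $\vol(B(v,\varepsilon)) = \tfrac{\pi \varepsilon^{n-1}}{n-1}\,\vol(S^{2n-3})$. Inserting the standard value $\vol(S^{2n-3}) = \tfrac{2\pi^{n-1}}{(n-2)!}$ produces $\tfrac{2\pi^n \varepsilon^{n-1}}{(n-1)!}$, as claimed; setting $\varepsilon = 1$ recovers $\vol(S^{2n-1}) = \tfrac{2\pi^n}{(n-1)!}$, consistent with Corollary~\ref{vol.s}.

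The main obstacle is the Jacobian computation establishing the measure factorization, since everything downstream is a single one-variable integral. If one prefers to avoid the metric calculation, there is a clean probabilistic alternative: for $w$ uniform on $S^{2n-1}$, the tuple $(|w_1|^2,\dots,|w_n|^2)$ is uniformly distributed on the standard simplex, so $|w_1|^2$ follows a $\mathrm{Beta}(1,n-1)$ law with density $(n-1)(1-t)^{n-2}$ on $[0,1]$. Then the fraction of the sphere lying in $B(v,\varepsilon)$ equals $\int_{1-\varepsilon}^{1}(n-1)(1-t)^{n-2}\,dt = \varepsilon^{n-1}$, and multiplying by $\vol(S^{2n-1}) = \tfrac{2\pi^n}{(n-1)!}$ gives the result immediately.
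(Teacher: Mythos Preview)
Your argument is correct, but note that in the paper Corollary~\ref{vol.s} has no separate proof: it is simply Proposition~\ref{vol.b} with $\varepsilon=1$. What you have really written is an alternative proof of Proposition~\ref{vol.b}, followed by that specialization.

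The two arguments differ in how they slice the sphere. The paper parametrizes $S^{2n-1}$ as a graph over one \emph{real} coordinate: it writes $y_n = f(x_1,y_1,\dots,x_n)$, computes the classical surface-area integrand $\sqrt{1+|\nabla f|^2}=1/f$, does the $x_n$-integral via $\int_{-a}^a (a^2-x^2)^{-1/2}\,dx=\pi$, and is left with the volume of a Euclidean ball $D(0,\sqrt{\varepsilon})\subset \R^{2n-2}$. Your approach instead fibers over one \emph{complex} coordinate, producing $S^{2n-3}$ fibers and the clean density $(1-|a|^2)^{n-2}$. Your route is tidier and makes the factor $\varepsilon^{n-1}$ appear from a single Beta-type integral, but it imports $\vol(S^{2n-3})=\tfrac{2\pi^{n-1}}{(n-2)!}$ from one dimension down, so as a derivation of the sphere volume it is recursive rather than closed; the paper's version is self-contained modulo only the Euclidean-ball volume formula. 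Your probabilistic alternative (that $|w_1|^2\sim\mathrm{Beta}(1,n-1)$) is the slickest path to the \emph{ratio} in Corollary~\ref{frac}, but it is circular for Corollary~\ref{vol.s} itself, since you multiply by $\vol(S^{2n-1})$ at the end.
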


\begin{corollary} \label{frac}
We have $$\displaystyle \frac{ \vol(B(v,\varepsilon))}{\vol(S^{2n-1})}  = \varepsilon^{n-1}.$$
\end{corollary}

\begin{proof} [Proof of Proposition~\ref{vol.b}]
Without loss of generality, let us assume $v = (0,0,\dots,0,1) \in \C^n$. Then $B(v,\varepsilon)  = \{w\ | \ |w_n|^2 \geq  1 - \varepsilon \} \subseteq S^{2n-1}$. In real coordinates, this is the surface described by 
\begin{itemize}
\item[A.] $x_1^2 + y_1^2 + x_2^2 + \dots x_n^2 + y_n^2 = 1$ and 
\item[B.] $x_n^2 + y_n^2 \geq 1 - \varepsilon$ (or equivalently, $x_1^2 + y_1^2 + \dots x_{n-1}^2 + y_{n-1}^2 \leq \varepsilon$). 
\end{itemize}

If we restrict to $y_n \geq 0$, we get precisely half the surface, and we compute the volume of this half of the surface by writing it as a parametrized surface. We write 
$$
y_n = f(x_1,y_1,\dots,x_{n-1},y_{n-1},x_n) =  \sqrt{1 - x_n^2 - R^2} = \sqrt{1 - x_1^2 - y_1^2 - \dots - x_n^2},
$$
 where $R = \sqrt{x_1^2 + y_1^2 + \dots + x_{n-1}^2 + y_{n-1}^2}$. Now, the domain for the parametrized surface is $\{(x_1,y_1,\dots,x_n,y_n)\ |\ R^2 \leq \varepsilon, - \sqrt{1 - R^2} \leq x_{n} \leq \sqrt{1 - R^2}\}$. The volume of this surface is given by the following formula:

$$ \int_{D(0,\sqrt\varepsilon)} \int_{x_n = - \sqrt{1 - R^2}}^{\sqrt{1-R^2}} \left({\textstyle \sqrt{1 + \left(\frac{\partial f}{\partial x_1}\right)^2 +  \left(\frac{\partial f} {\partial y_1}\right)^2 + \dots + \left(\frac{\partial f}{\partial y_{n-1}}\right)^2 + \left(\frac{\partial f}{\partial x_n}\right)^2}} \right) dx_n dx_1dy_1 \dots dy_{n-1}, 
$$

where $D(0,\sqrt\varepsilon) = \{ (x_1,x_2,\dots,x_{n-1},y_{n-1})\ |\  x_1^2 + y_1^2 + \dots + y_{n-1}^2 \leq \varepsilon \}$. We compute $\left(\frac{\partial f} {\partial y_i}\right) = -  y_i/f$, and $\left(\frac{\partial f} {\partial x_i}\right) = -  x_i/f$, and so we get that the integrand is simply $1/f = 1/\sqrt{1 - x_n^2 - R^2}$. In other words, we have
$$ \int_{D(0,\sqrt\varepsilon)} \int_{x_n = - \sqrt{1 - R^2}}^{\sqrt{1-R^2}} \frac{1}{\sqrt{1 - R^2 - x_n^2}} dx_n dx_1dy_1 \dots dy_{n-1},$$

But now applying the formula $\int_{-a}^a \frac{1}{\sqrt{a^2 - x^2}} dx = \left[ {\rm sin}^{-1}(\frac{x}{a}) \right]_{x = -a}^{x = a} = \pi$, for $a = \sqrt{1-R^2}$, the integral simplifies to 

$$\int_{D(0,\sqrt\varepsilon)} \pi dx_1dy_1 \dots dy_{n-1} = \pi \cdot \vol(D(0,\sqrt\varepsilon)) = \pi \left(\frac{\pi^{n-1} \varepsilon^{n-1}}{(n-1)!} \right) = \frac{\pi^n\varepsilon^{n-1}}{(n-1)!}.$$

This concludes the computation for precisely half the surface, and so we multiply by two to get the required result.
\end{proof}

\begin{proposition} \label{in.pdt.calc}
Let $v,z,w \in S^{2n-1} \subset \C^n$ such that $|\langle v,z\rangle |^2 \geq 1- \varepsilon$ and $|\langle z,w\rangle |^2 \geq 1-\varepsilon$. Then $|\langle v,w\rangle |^2 \geq 1 - 4\varepsilon.$
\end{proposition}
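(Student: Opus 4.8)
The plan is to exploit the orthogonal decomposition of $v$ and $w$ relative to the unit vector $z$, reducing the claim to the reverse triangle inequality combined with Cauchy--Schwarz. First I would write
$$
v = \langle v,z\rangle\, z + v^\perp, \qquad w = \langle w,z\rangle\, z + w^\perp,
$$
where $v^\perp$ and $w^\perp$ are the components orthogonal to $z$. Since $\|v\| = \|z\| = 1$, the Pythagorean theorem gives $\|v^\perp\|^2 = 1 - |\langle v,z\rangle|^2 \leq \varepsilon$ and likewise $\|w^\perp\|^2 \leq \varepsilon$; moreover $|\langle v,z\rangle| \geq \sqrt{1-\varepsilon}$ and $|\langle w,z\rangle| \geq \sqrt{1-\varepsilon}$ directly from the hypotheses.

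Next I would expand the inner product $\langle v,w\rangle$. The two cross terms involve $\langle z, w^\perp\rangle$ and $\langle v^\perp, z\rangle$, both of which vanish by orthogonality, leaving
$$
\langle v,w\rangle = \langle v,z\rangle\,\overline{\langle w,z\rangle} + \langle v^\perp, w^\perp\rangle.
$$
Applying the reverse triangle inequality for the modulus and then Cauchy--Schwarz to the second summand, I obtain
$$
|\langle v,w\rangle| \geq |\langle v,z\rangle|\,|\langle w,z\rangle| - \|v^\perp\|\,\|w^\perp\| \geq (1-\varepsilon) - \varepsilon = 1 - 2\varepsilon.
$$

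Finally I would pass to squares, which is the only step requiring a little care, since the bound $|\langle v,w\rangle| \geq 1-2\varepsilon$ can be squared only when its right-hand side is nonnegative. If $\varepsilon \geq 1/4$ the conclusion is immediate, because $|\langle v,w\rangle|^2 \geq 0 \geq 1 - 4\varepsilon$. If instead $\varepsilon < 1/4$, then $1 - 2\varepsilon > 0$, and squaring gives
$$
|\langle v,w\rangle|^2 \geq (1-2\varepsilon)^2 = 1 - 4\varepsilon + 4\varepsilon^2 \geq 1 - 4\varepsilon,
$$
which is exactly the desired inequality. This case split on the size of $\varepsilon$ is what makes the argument valid for all $0 < \varepsilon < 1$ rather than only in the small-$\varepsilon$ regime.
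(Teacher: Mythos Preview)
Your proof is correct. Both arguments arrive at the key intermediate bound $|\langle v,w\rangle| \geq 1 - 2\varepsilon$ before squaring, but via different decompositions. The paper fixes $v$ as a coordinate vector, splits $z$ and $w$ along and orthogonal to $v$, and then manipulates the resulting inequality (squaring both sides, dividing through by $1-|w_n|$) to isolate $|w_n| = |\langle v,w\rangle| \geq 1-2\delta$. You instead project both $v$ and $w$ onto the \emph{intermediate} vector $z$, which makes the two hypotheses enter symmetrically and lets the estimate $(1-\varepsilon)-\varepsilon$ drop out in one line from the reverse triangle inequality plus Cauchy--Schwarz. Your route is shorter and coordinate-free, and it sidesteps the mild edge case $|w_n|=1$ in the paper's division step; your explicit case split on $\varepsilon \lessgtr 1/4$ at the squaring stage is a clean way to handle what the paper leaves implicit.
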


\begin{proof}
By symmetry, we may assume $|\left< z,w \right>|^2 \geq |\left< v , z \right>|^2  = 1- \delta$ for some $\delta \leq \varepsilon$. Without loss of generality, let us assume $v = (0,\dots,0,1) \in \C^n$. Then we have $ \sqrt{1-\delta} =  |\langle v,z\rangle | = |z_n|$. For $x = (x_1,\dots,x_{n-1},x_n) \in \C^n$, define $\widetilde{x} = (x_1,\dots,x_{n-1}) \in \C^{n-1}$. Hence we have $z = (\widetilde{z},z_n)$ and $w = (\widetilde{w},w_n)$.

Now, we have 

\begin{align*}
\sqrt{1- \delta} & \leq | \langle z,w \rangle| \\
  & = | \langle \widetilde{z}, \widetilde{w} \rangle + z_n\overline{w_n}| \\
  & \leq | \langle \widetilde{z}, \widetilde{w} \rangle| + |z_n\overline{w_n}| \\
  & \leq ||\widetilde{z}|| \cdot || \widetilde{w} || + (\sqrt{1 - \delta}) |w_n| \\
  & = \sqrt{\delta} \sqrt{1 - |w_n|^2} + (\sqrt{1 - \delta}) |w_n|
\end{align*}

The last equality follows from the fact that for $x \in \C^n$, we have $||x ||^2 = ||\widetilde{x}||^2 + |x_n|^2$. In particular, we can rewrite the inequality as 
\begin{align*}
 \sqrt{1 - \delta}(1 - |w_n|) & \leq \sqrt{\delta} \sqrt{1 - |w_n|^2} 
\end{align*}

On squaring the terms, we get 
$$
(1 - \delta) (1 - |w_n|)^2 \leq \delta (1 - |w_n|^2).
$$

Now, dividing both sides by $1- |w_n|$, we get
$$
(1 - \delta) (1 - |w_n|) \leq \delta (1 + |w_n|).
$$

Rearranging the terms, we get $|w_n| \geq 1 - 2\delta$. We get the required conclusion since we have $|\langle v, w \rangle|^2 = |w_n|^2 \geq (1 - 2 \delta)^2 \geq 1 - 4 \delta \geq  1- 4 \varepsilon$.

\end{proof}

In the next proposition, we give an upper bound on the number of $\varepsilon$-balls needed to cover all of $S^{2n-1}$. In order to do this, we borrow a standard technique from coding theory and modify it appropriately.

\begin{proposition} \label{e-cover}
There exists $v_1,v_2,\dots,v_N \in S^{2n-1}$ such that $\bigcup\limits_{i=1}^N B(v_i, \varepsilon)$ is all of $S^{2n-1}$ for some $N \leq (\frac{4}{\varepsilon})^{n-1}$
\end{proposition}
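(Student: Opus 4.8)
The plan is to use a standard greedy (maximal packing) argument borrowed from coding theory, where the $\varepsilon$-balls $B(v,\varepsilon)$ play the role of Hamming balls. First I would choose a maximal collection $v_1, v_2, \dots, v_N \in S^{2n-1}$ that is $\varepsilon$-separated in the sense that $|\langle v_i, v_j\rangle|^2 < 1-\varepsilon$ for all $i \neq j$; such a maximal collection is finite because the sphere is compact. The key claim is that the balls $B(v_i,\varepsilon)$ then cover all of $S^{2n-1}$: if some $w$ were not covered, then $|\langle v_i, w\rangle|^2 < 1-\varepsilon$ for every $i$, so $w$ could be adjoined to the collection while preserving the separation property, contradicting maximality. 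This gives the covering; it remains only to bound $N$.

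To bound $N$, I would pass to a packing argument using the smaller balls $B(v_i, \varepsilon/4)$. The crucial point, supplied by Proposition~\ref{in.pdt.calc}, is that these smaller balls are pairwise disjoint: if $z \in B(v_i,\varepsilon/4) \cap B(v_j, \varepsilon/4)$, then $|\langle v_i, z\rangle|^2 \geq 1 - \varepsilon/4$ and $|\langle z, v_j\rangle|^2 \geq 1-\varepsilon/4$, whence Proposition~\ref{in.pdt.calc} (applied with $\varepsilon/4$ in place of $\varepsilon$) yields $|\langle v_i, v_j\rangle|^2 \geq 1 - \varepsilon$, contradicting the separation $|\langle v_i, v_j\rangle|^2 < 1-\varepsilon$. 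Since the $N$ disjoint balls $B(v_i,\varepsilon/4)$ all sit inside $S^{2n-1}$, their total volume is at most $\vol(S^{2n-1})$.

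Finally I would feed in the volume computations. By Corollary~\ref{frac}, each small ball has relative volume
$$
\frac{\vol(B(v_i, \varepsilon/4))}{\vol(S^{2n-1})} = \left(\frac{\varepsilon}{4}\right)^{n-1},
$$
so disjointness forces
$$
N \left(\frac{\varepsilon}{4}\right)^{n-1} \leq 1,
$$
which rearranges to the desired bound $N \leq (4/\varepsilon)^{n-1}$.

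The conceptually delicate step is the disjointness of the small balls, since it is exactly where Proposition~\ref{in.pdt.calc} must be invoked with the correct radius $\varepsilon/4$; getting the constant $4$ to align is the whole reason that proposition was proved, and it is the main thing to verify carefully. The maximality/covering step and the volume-ratio arithmetic are routine by comparison, so I expect the bookkeeping of which radius is used in the packing versus the covering to be the only genuine subtlety.
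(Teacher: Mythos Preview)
Your proof is correct and is essentially the same maximal-packing argument as the paper's, using Proposition~\ref{in.pdt.calc} at radius $\varepsilon/4$ and the volume ratio from Corollary~\ref{frac} in exactly the same way. The only cosmetic difference is the order: the paper starts from a maximal family of pairwise disjoint $\varepsilon/4$-balls and then invokes Proposition~\ref{in.pdt.calc} to deduce that the corresponding $\varepsilon$-balls cover, whereas you start from a maximal $\varepsilon$-separated set (so covering is immediate) and use Proposition~\ref{in.pdt.calc} to deduce that the $\varepsilon/4$-balls are disjoint.
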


\begin{proof}
 Let $B(v_1,\frac{\varepsilon}{4}),B(v_2,\frac{\varepsilon}{4}),\dots,B(v_N, \frac{\varepsilon}{4})$ be a maximal collection of non-interesecting $\frac{\varepsilon}{4}$-balls on $S^{2n-1}$. Observe that $N \leq \frac{\vol(S^{2n-1})}{\vol(B(v,\frac{\varepsilon}{4}))} = (\frac{4}{\varepsilon})^{n-1}$.

Now for any $w \in S^{2n-1}$, we have that $B(w,\frac{\varepsilon}{4}) \cap B(v_i,\frac{\varepsilon}{4})$ is non-empty for some $i$. Hence $\exists z \in S^{2n-1}$ such that $|\langle z,w \rangle|^2 \geq 1-\frac{\varepsilon}{4}$ and  $|\langle v_i,z \rangle|^2 \geq 1-\frac{\varepsilon}{4}$, and hence by Proposition~\ref{in.pdt.calc}, we have that $w \in B(v_i,\varepsilon)$.
Hence $\bigcup\limits_i B(v_i,\varepsilon)$ is all of $S^{2n-1}$. 
\end{proof}

\section{Main results} \label{sec:main}

\subsection{Upper bounds on entanglement}
We will derive the upper bounds in Proposition~\ref{upper bounds} by a simple induction argument.

\begin{proof} [Proof of Proposition~\ref{upper bounds}]
We prove this by induction on $k$. The base case $k = 1$ is trivial. 

Now, assume $k \geq 2$. We define

\begin{align*}
C(n,k) &:= \min\{ ||T||_\sigma \ | \ T \in S^{2n^k-1} \subset (\C^n)^{\otimes k} \} \\
&\  = \min \left\{ \frac{||T||_\sigma}{||T||} \ \bigg{|} \  T \in (\C^n)^{\otimes k} \right\}.
\end{align*}

Let $e_1,\dots,e_n$ denote the standard basis for $\C^n$. We can write any $T \in (\C^n)^{\otimes k}$ as $T = \sum_{i=1}^n T_i \otimes e_i$. Further, observe that $||T||_\sigma \geq ||T_i||_\sigma$, since for any unit length tensor $u \in (\C^n)^{\otimes k-1}$, we have $|\langle T_i,u \rangle| \leq |\langle T, u \otimes e_i\rangle|$.

Hence, we have 

\begin{align*}
||T||^2 & = \sum_{i=1}^n ||T_i||^2 \\
   & \leq \sum_{i=1}^n \frac{||T_i||_\sigma^2}{C(n,k-1)^2}.\\
   & \leq \sum_{i=1}^n \frac{||T||_\sigma^2}{C(n,k-1)^2}. \\
   & = n \frac{||T||_\sigma^2}{C(n,k-1)^2}
   \end{align*}

In particular for unit length tensors $T$, we have $||T||_\sigma \geq C(n,k-1)^2/n$, and hence 
\begin{align*}
E(T) &= - 2 \log_2(||T||_\sigma) \\ 
&\leq - 2 \log_2(C(n,k-1)) + \log_2(n) \\
& \leq (k-2)\log_2(n)  + \log_2(n)\\
& = (k-1) \log_2(n),
\end{align*}
 where the second inequality follows by induction. 
 
 Recall that maximum possible entanglement in both measures $E(T)$ and $F(T)$ is the same (\cite[Corollary~6.1]{DFLW}). So, we get the result for $F(T)$ as well.

\end{proof}

\subsection{Lower bounds on entanglement}
In this section, we will give proofs of our main lower bound results on entanglement for general tensors. For this section, we will fix $v_1,v_2,\dots,v_N \in S^{2n-1} \subset \C^n$ such that $\bigcup\limits_{i=1}^N B(v_i, \varepsilon)$ is all of $S^{2n-1}$. The following proposition is the key technical result to derive Theorem~\ref{main}.
 
\begin{proposition}
Let $C = \min\{||T||_\sigma \ |\  T \in S^{2n^k-1}\subset (\C^n)^{\otimes k} \}$. Then, for all $T \in S^{2n^k -1}$, we have $|\langle T, v_{i_1} \otimes v_{i_2} \otimes \dots \otimes v_{i_k}\rangle|^2 \geq C^2(1-\varepsilon)^{k}$ for some $1 \leq i_1,i_2,\dots,i_k \leq N$. 
\end{proposition}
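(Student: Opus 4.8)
The plan is to use the fact that the spectral norm is achieved at a pure tensor, together with the covering property of the balls $B(v_i,\varepsilon)$, to show that one of the finitely many lattice pure tensors $v_{i_1}\otimes\cdots\otimes v_{i_k}$ is a good enough witness. First I would fix an arbitrary $T\in S^{2n^k-1}$. By the definition of $C$, we have $\|T\|_\sigma\geq C$, and by Definition~\ref{def1} the spectral norm is a \emph{maximum} over unit pure tensors, so there exist unit vectors $u_1,\dots,u_k\in S^{2n-1}\subset\C^n$ with $u=u_1\otimes\cdots\otimes u_k$ achieving $|\langle T,u\rangle|=\|T\|_\sigma\geq C$. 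The goal is then to replace each $u_j$ by some nearby $v_{i_j}$ drawn from the fixed covering set, controlling how much the inner product $\langle T,u_1\otimes\cdots\otimes u_k\rangle$ degrades under this replacement.

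The key step is the approximation. Since $\bigcup_{i=1}^N B(v_i,\varepsilon)=S^{2n-1}$, for each $j$ there is an index $i_j$ with $u_j\in B(v_{i_j},\varepsilon)$, i.e.\ $|\langle v_{i_j},u_j\rangle|^2\geq 1-\varepsilon$. I would then argue that the multiplicativity of the inner product on pure tensors, namely $\langle T,\,v_{i_1}\otimes\cdots\otimes v_{i_k}\rangle$ compared against $\langle T,\,u_1\otimes\cdots\otimes u_k\rangle$, allows us to peel off one factor at a time. Writing $u=u_1\otimes\cdots\otimes u_k$ and the target pure tensor as $w=v_{i_1}\otimes\cdots\otimes v_{i_k}$, the natural estimate is
$$
|\langle T,w\rangle|^2 \geq |\langle T,u\rangle|^2\cdot\prod_{j=1}^k|\langle v_{i_j},u_j\rangle|^2 \geq C^2(1-\varepsilon)^k,
$$
which is exactly the claimed bound.

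The main obstacle, and the step I would need to justify carefully, is the middle inequality: that swapping $u_j$ for $v_{i_j}$ costs at most a factor $|\langle v_{i_j},u_j\rangle|^2\geq 1-\varepsilon$ in $|\langle T,\cdot\rangle|^2$. This is not literally a product of the $u_j\mapsto v_{i_j}$ overlaps applied naively, because $T$ is a general (entangled) tensor, not a pure tensor, so one cannot simply factor $\langle T,w\rangle$ as a product. The clean way to handle this is a one-factor-at-a-time induction using the contraction structure: fixing all factors except the $j$-th, the map $x\mapsto\langle T,(\text{fixed factors})\otimes x\otimes(\text{fixed factors})\rangle$ is a linear functional on $\C^n$, and replacing the input unit vector $u_j$ by $v_{i_j}$ changes the value in a way governed by the decomposition $u_j = \langle v_{i_j},u_j\rangle v_{i_j} + (\text{orthogonal part})$. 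Expanding along this orthogonal decomposition and bounding the cross terms using the spectral norm as the supremum over \emph{all} pure tensors (so that the contribution of the component of $u_j$ orthogonal to $v_{i_j}$ is itself controlled by $\|T\|_\sigma$) yields the factor $(1-\varepsilon)$ at each step. Iterating over $j=1,\dots,k$ and using $|\langle v_{i_j},u_j\rangle|^2\geq 1-\varepsilon$ accumulates the product $(1-\varepsilon)^k$, giving the stated result.
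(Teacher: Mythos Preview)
Your overall strategy is right, but the displayed inequality
\[
|\langle T,w\rangle|^2 \;\geq\; |\langle T,u\rangle|^2\cdot\prod_{j=1}^k|\langle v_{i_j},u_j\rangle|^2
\]
is false in general, and your one-factor-at-a-time sketch does not rescue it. The first replacement works exactly as you hope: since $u$ achieves $\|T\|_\sigma$, the vector $u_1$ is the maximizer of the functional $x\mapsto\langle T,x\otimes u_2\otimes\cdots\otimes u_k\rangle$, so its Riesz vector is a scalar multiple of $u_1$ and swapping $u_1$ for a nearby $v_{i_1}$ costs exactly a factor $\sqrt{1-\varepsilon}$. The trouble is step two. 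After the swap, the new functional $x\mapsto\langle T,v_{i_1}\otimes x\otimes u_3\otimes\cdots\otimes u_k\rangle$ has a Riesz vector that is no longer proportional to $u_2$, so picking $v_{i_2}$ close to $u_2$ gives no clean multiplicative factor. Your orthogonal-decomposition bound only yields $|g_2(v_{i_2})|\geq |g_2(u_2)|-\sqrt{\varepsilon}\,\|T\|_\sigma$, and since the subtracted term is controlled by $\|T\|_\sigma$ rather than by $|g_2(u_2)|$, the losses are additive (of order $k\sqrt{\varepsilon}$) rather than multiplicative. For a concrete failure take $n=k=2$, $T=\tfrac{1}{\sqrt2}(e_1\otimes e_1+e_2\otimes e_2)$, $u_1=u_2=e_1$, and covering points $v_{i_1}=\cos\theta\,e_1+\sin\theta\,e_2$, $v_{i_2}=\cos\theta\,e_1-\sin\theta\,e_2$ with $\cos^2\theta=1-\varepsilon$; then $|\langle T,v_{i_1}\otimes v_{i_2}\rangle|^2=\tfrac12(1-2\varepsilon)^2<\tfrac12(1-\varepsilon)^2=|\langle T,u\rangle|^2(1-\varepsilon)^2$.

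The fix, which is what the paper does, is to choose the covering point at each step near the \emph{current} Riesz vector rather than near the original $u_j$. At step $j$, with $v_{i_1},\dots,v_{i_{j-1}}$ already fixed, let $f$ be the functional $x\mapsto\langle T,v_{i_1}\otimes\cdots\otimes v_{i_{j-1}}\otimes x\otimes a_{j+1}\otimes\cdots\otimes a_k\rangle$ with Riesz vector $v_f$; pick $v_{i_j}$ so that $v_f/\|v_f\|\in B(v_{i_j},\varepsilon)$. Then $|f(v_{i_j})|=|\langle v_f,v_{i_j}\rangle|\geq\sqrt{1-\varepsilon}\,\|v_f\|\geq\sqrt{1-\varepsilon}\,|f(a_j)|$, and the factor $\sqrt{1-\varepsilon}$ genuinely multiplies at every step. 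The point is that the indices $i_j$ must be chosen adaptively from the functional, not in advance from the original maximizer $u$.
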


\begin{proof}
By definition, for any $T \in (\C^n)^{\otimes k}$, we have $|\langle T,a_1 \otimes a_2 \otimes \dots \otimes a_k \rangle| \geq C$ for some pure tensor $a_1 \otimes a_2 \otimes \dots \otimes a_k$.

The map $v \mapsto \langle T, v \otimes a_2 \otimes \dots \otimes a_k \rangle$ defines a element of $(\C^n)^*$, which we denote by $f$. Using the linear isomorphism $\C^n$ with $(\C^n)^*$ given by the inner product, we get a vector $v_f$ such that $f(w) = \langle v_f,w\rangle$. Since $\bigcup\limits_{i=1}^N B(v_i, \varepsilon)$ is all of $S^{2n-1}$, there exists $v_{i_1}$ such that $$|\langle v_f,v_{i_1} \rangle| \geq (\sqrt{1-\varepsilon}) ||v_f|| \geq (\sqrt{1-\varepsilon}) |\langle v_f,a_1 \rangle| \geq C\sqrt{1-\varepsilon}.$$

Hence $$|\langle T, v_{i_1} \otimes a_2 \otimes \dots \otimes a_k \rangle| = |\langle v_f,v_{i_1} \rangle| \geq C\sqrt{1-\varepsilon}.$$

Repeating the argument for all the tensor factors, we get a tensor $v_{i_1} \otimes v_{i_2} \otimes \dots \otimes v_{i_k}$ such that $$|\langle T, v_{i_1} \otimes v_{i_2} \otimes \dots \otimes v_{i_k}\rangle| \geq C(\sqrt{1-\varepsilon})^{k}.$$ 
\end{proof}

We write $[N] := \{1,2,\dots,N\}$ in the following corollary.

\begin{corollary}
We have $\bigcup\limits_{ i_1,i_2,\dots,i_k \in [N]^k} B(v_{i_1} \otimes v_{i_2} \otimes \dots \otimes v_{i_k}, 1-C^2(1-\varepsilon)^k)$ is all of $S^{2n^k -1}$.
\end{corollary}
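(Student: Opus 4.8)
The plan is to combine the preceding proposition with the definition of the $\varepsilon$-ball on the big sphere $S^{2n^k-1}$. Recall that for a unit tensor $u \in (\C^n)^{\otimes k}$, the $\delta$-ball is defined as $B(u,\delta) = \{w \in S^{2n^k-1} \mid |\langle u,w\rangle|^2 \geq 1-\delta\}$. So to show that the union of balls $B(v_{i_1}\otimes\cdots\otimes v_{i_k}, 1-C^2(1-\varepsilon)^k)$ covers the whole sphere, I need to verify that every unit tensor $T$ lies in at least one such ball.

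First I would note that each point $v_{i_1}\otimes\cdots\otimes v_{i_k}$ is a unit tensor, since the $v_i$ are unit vectors in $\C^n$ and the norm is multiplicative on pure tensors; hence each $B(v_{i_1}\otimes\cdots\otimes v_{i_k}, 1-C^2(1-\varepsilon)^k)$ is a genuine $\delta$-ball with $\delta = 1-C^2(1-\varepsilon)^k$. Then I would take an arbitrary $T \in S^{2n^k-1}$ and apply the preceding proposition directly: it hands me indices $i_1,\dots,i_k \in [N]$ with
$$|\langle T, v_{i_1}\otimes\cdots\otimes v_{i_k}\rangle|^2 \geq C^2(1-\varepsilon)^k.$$

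Now comes the one substantive bookkeeping step: I rewrite this inequality in the exact form demanded by membership in the $\delta$-ball. Setting $\delta = 1 - C^2(1-\varepsilon)^k$, the inequality above reads precisely $|\langle T, v_{i_1}\otimes\cdots\otimes v_{i_k}\rangle|^2 \geq 1 - \delta$, which is the defining condition for $T \in B(v_{i_1}\otimes\cdots\otimes v_{i_k}, \delta)$. Therefore $T$ belongs to the union, and since $T$ was arbitrary the union covers all of $S^{2n^k-1}$.

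Honestly, there is no real obstacle here — this corollary is essentially a restatement of the proposition in the geometric language of covering balls, so the entire content is the translation $|\langle T,u\rangle|^2 \geq C^2(1-\varepsilon)^k \iff |\langle T,u\rangle|^2 \geq 1-\delta$ with $\delta = 1-C^2(1-\varepsilon)^k$. The only point worth double-checking is that $\delta$ lies in a sensible range so that the $\delta$-ball notation is meaningful, i.e.\ that $C^2(1-\varepsilon)^k \in (0,1]$; this follows because $C \leq 1$ (by Cauchy--Schwarz, $\|T\|_\sigma \leq \|T\|$) and $0 < \varepsilon < 1$, so $\delta \in [0,1)$ and each ball is a nonempty subset of the sphere. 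The proof is thus a single short paragraph invoking the proposition.
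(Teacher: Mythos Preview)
Your argument is correct and matches the paper's approach exactly: the paper states this corollary without proof, treating it as an immediate consequence of the preceding proposition via the definition of the $\varepsilon$-ball, and your write-up spells out precisely that translation.
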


Using Corollary~\ref{frac}, we get

\begin{corollary} \label{asdf}
 We have
$$ N^k \left((1-C^2(1-\varepsilon)^k)^{n^k-1} \right) \geq 1.$$
\end{corollary}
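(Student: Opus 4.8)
\textbf{Proof proposal for Corollary~\ref{asdf}.}

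The plan is to combine the covering statement from the previous corollary with the volume ratio computed in Corollary~\ref{frac}, using nothing more than the fact that volumes of overlapping sets cannot exceed the total volume only after we account for how many sets there are. First I would invoke the previous corollary, which tells us that the collection of balls
$$
\Bigl\{\, B(v_{i_1} \otimes \dots \otimes v_{i_k},\, 1 - C^2(1-\varepsilon)^k) \,\Bigm|\, (i_1,\dots,i_k) \in [N]^k \,\Bigr\}
$$
covers all of $S^{2n^k-1}$. There are exactly $N^k$ such balls, one for each multi-index in $[N]^k$.

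Next I would compare volumes. Since the balls cover the sphere, the sum of their volumes is at least the volume of the sphere: by subadditivity of volume under a covering,
$$
\vol(S^{2n^k-1}) \;\leq\; \sum_{(i_1,\dots,i_k) \in [N]^k} \vol\bigl(B(v_{i_1}\otimes\dots\otimes v_{i_k},\, 1-C^2(1-\varepsilon)^k)\bigr).
$$
Here the key point is that the relevant sphere is $S^{2n^k-1}$, living in $(\C^n)^{\otimes k} = \C^{n^k}$, so Corollary~\ref{frac} must be applied with the ambient dimension $n$ replaced by $n^k$. That corollary gives, for each ball, the ratio
$$
\frac{\vol\bigl(B(v,\,1-C^2(1-\varepsilon)^k)\bigr)}{\vol(S^{2n^k-1})} = \bigl(1 - C^2(1-\varepsilon)^k\bigr)^{n^k-1}.
$$

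Dividing the covering inequality through by $\vol(S^{2n^k-1})$ and substituting this ratio for each of the $N^k$ identical terms yields directly
$$
1 \;\leq\; N^k \bigl(1 - C^2(1-\varepsilon)^k\bigr)^{n^k-1},
$$
which is the claimed inequality. The only subtlety I expect is bookkeeping: one must be careful that the $\varepsilon$-ball radius appearing in Corollary~\ref{frac} is the radius $1 - C^2(1-\varepsilon)^k$ of the product balls (not the original $\varepsilon$), and that the exponent is $n^k - 1$ rather than $n-1$ because we are now on the big sphere $S^{2n^k-1}$. There is no real analytic obstacle here; the content is entirely front-loaded in the previous covering corollary, and this step is a clean volume-counting argument.
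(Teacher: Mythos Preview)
Your proposal is correct and matches the paper's approach exactly: the paper simply says ``Using Corollary~\ref{frac}, we get'' and states the inequality, so you have in fact spelled out more of the bookkeeping (the subadditivity of volume under a cover, and the replacement of $n$ by $n^k$ in the exponent) than the paper itself does.
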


We have all the tools required to prove Theorem~\ref{main}.

\begin{proof} [Proof of Theorem~\ref{main}]
We can use the above inequality to get an upper bound for $C$. We have
\begin{align*}
1 &\leq  N^k (1-C^2(1-\varepsilon)^k)^{n^k-1} \\
 & \leq \left(\frac{4}{\varepsilon}\right)^{k(n-1)}(1-C^2(1-\varepsilon)^k)^{n^k-1}.
\end{align*} 
Taking logarithms we get
\begin{align*}
 0 & \leq k(n-1) \ln\left(\frac{4}{\varepsilon}\right) + (n^k-1) ln(1-C^2(1-\varepsilon)^k)\\
  &\leq k(n-1) \ln\left(\frac{4}{\varepsilon}\right) + (n^k-1) (-C^2(1-\varepsilon)^k).
\end{align*}\
This gives us
$$
C^2 \leq \frac{ k(n-1) \ln(\frac{4}{\varepsilon})}{(n^k-1)(1-\varepsilon)^k},
$$
as required.

\end{proof}

With the proof of Theorem~\ref{main}, we now proceed to deduce Corollary~\ref{cor:main} and Corollary~\ref{fraction}. These are a bit computational, and we will try to keep the computations transparent and succinct.

\begin{proof} [Proof of Corollary~\ref{cor:main}]
From Proposition~\ref{main}, we get 
$$
E_{\max} \geq \log_2\left(\frac{n^k -1}{n-1}\right) - \log_2(k) + \log_2(1-\varepsilon)^k - \log_2\left(ln\left(\frac{4}{\varepsilon}\right)\right).
$$
by taking a logarithm on both sides. 

First, observe that  $\log_2\left(\displaystyle\frac{n^k -1}{n-1}\right) \geq \log_2(n^{k-1}) = (k-1) \log_2(n)$. 

Next, set $\varepsilon = \delta/k$. So, $(1-\varepsilon)^k \geq 1 - k \varepsilon = 1 - \delta$. Also, note that $\log_2\left(ln\left(\frac{4}{\varepsilon}\right)\right) = \log_2 \left( \ln(k) + \ln(4/\delta) \right)$. Now, set $\delta = 4/e^3$. Then 
$$
\log_2(1-\varepsilon)^k \geq \log_2(1-\delta) = \log_2(1-4/e^3) \geq -1,
$$ and 
$$
\log_2 \left( \ln(k) + \ln(4/\delta) \right) = \log_2(\ln(k) + 3) \leq \log_2(2\ln(k)) = \log_2(\ln(k)) + 1.
$$
The inequality above follows because we assume $k \geq 21$ and consequently $\ln(k) \geq 3$.
Thus, we get
$$
E_{\max} \geq (k-1) \log_2 (n) - \log_2(k) - \log_2(\ln(k)) - 2
$$




\end{proof}


We can now prove Corollary~\ref{fraction}

\begin{proof} [Proof of Corollary~\ref{fraction}]
Let $D$ be the infimum of all numbers $d$ such that the fraction of unit length tensors $T$ having $||T||_\sigma > d$ is at most $e^{-k}$. We want a lower bound on $-2\log_2(D)$. Similar to Corollary~\ref{asdf}, we can deduce
$$ 
N^k \left((1-D^2(1-\varepsilon)^k)^{n^k-1} \right) \geq e^{-k}.
$$ 

Again, using that $N \leq \left(\frac{4}{\varepsilon}\right)$, and following the steps in the proof of Theorem~\ref{main}, we get 

$$
D^2 \leq \frac{k \ln(\frac{4}{\varepsilon}) + k}{ (2^k -1)(1-\varepsilon)^k }
$$

Taking natural logarithms on both sides, we get
$$
-2 \log_2 (D) \geq \log_2(2^k-1) + \log_2 (1 - \varepsilon)^k  - \log(k) - \log_2(\ln(4/\varepsilon) + 1).
$$

First, note that for $k \geq 4$, we have $\log_2(2^k-1) \geq k - \frac{1}{4}$. Now, set $\epsilon = 1/4k$. Then we get $\log_2(1 - \varepsilon)^k \geq \log_2(1-k\varepsilon) = \log_2(3/4) > -3/4$. We also get $
\log_2(\ln(4/\varepsilon) + 1) = \log_2(\ln(16k) + 1).$

Now, for $k \geq 4$, one can check that $\ln(16k) + 1 = \ln(k) + \ln(16) + 1 \leq 4 \ln(k)$. Thus $\log_2(\ln(16k) + 1) \leq \log_2(\ln(k)) + 2$. Thus, we have:
$$
-2 \log_2 (D) \geq (k  - 1/4) - \log(k) - (3/4) -  \log_2(\ln(k)) - 2.
$$
Thus, we get
$$
-2 \log_2 (D) \geq k  - \log(k) - \log_2(\ln(k)) - 3
$$
as required.
\end{proof}

\subsection{An explicit tensor with high entanglement}
In this section, we prove Proposition~\ref{explicit}, i.e., we give a lower bound on the entanglement of an explicit tensor ($T_{n,p}$ as defined in Proposition~\ref{explicit}). Recall that 
$$
{\rm det}_n = \sum_{\pi \in S_n} e_{\pi(1)} \otimes e_{\pi(2)} \otimes \dots \otimes e_{\pi(n)} \in (\C^n)^{\otimes n}.
$$
We know that $||\det_n||_\sigma = 1$, by \cite{Derksen}. Consider the tensor $\det_{n^p} \in (\C^{n^p})^{\otimes n^p}$. By identifying $\C^{n^p}$ with $(\C^n)^{\otimes p}$, we identify $(\C^{n^p})^{\otimes n^p}$ with $(\C^n)^{\otimes pn^p}$. Thus we can consider $\det_{n^p}$ as a tensor in $(\C^n)^{\otimes pn^p}$. Note that considering $\det_{n^p}$ as a tensor in $(\C^n)^{\otimes pn^p}$ cannot increase the spectral norm since the set of pure tensors of unit length in $(\C^n)^{\otimes pn^p}$ is a subset of pure tensors in $(\C^{n^p})^{\otimes n^p}$. 

On the other hand, if $e_1,e_2,\dots,e_n$ is a standard basis for $\C^n$, we define $e_I = e_{i_1} \otimes \dots \otimes e_{i_p}$ for $I = (i_1,\dots,i_p) \in [n]^p$ where $[n] = \{1,2,\dots,n\}$. Fix any bijection $h:[n^p] \rightarrow [n]^p$. We define
$$
u = \bigotimes\limits_{i=1}^{n^p} e_{h(i)}.
$$

Clearly, $u$ is a pure tensor in $(\C^n)^{\otimes pn^p}$, and $|\langle \det_{n^p}, u \rangle| = 1$. Hence, $||\det_{n^p}||_\sigma = 1$, when considered as a tensor in $(\C^n)^{\otimes pn^p}.$ Now, consider the unit length tensor $T_{n,p}$. 

\begin{proof} [Proof of Proposition~\ref{explicit}]
By the above discussion, we have $||T||_\sigma = \frac{1}{\sqrt{n^p !}}$. Hence, we get $E(T) = 2 log_2 \sqrt{n^p !} = log_2( n^p!).$ Using stirling's formula, we compute
\begin{align*}
log_2(n^p !) & = n^p \log_2(n^p) - \log_2 (e)(n^p) + O(\log_2(n)) \\
& = pn^p \log_2(n) - o(pn^p).
\end{align*}

The proposition follows by replacing $pn^p$ with $k$. 

\end{proof}

\section{Symmetric tensors}
This section is devoted to the study of entanglement for symmetric tensors. The overall strategy parallels the case of general tensors. However, there is one additional ingredient that we will need, and that is Banach's theorem below.

Analogous to the case of general tensors, we define 
$$
C_s := \min\{||T||_\sigma^2 \ |\ T \in \Sm_1^m(\C^n)\}.
$$
Our goal is lower bounds on $C_s$. The following is the aforementioned result of Banach (see \cite{Banach}, \cite[Theorem~2.7]{FK}).

\begin{theorem} [Banach] \label{theo:banach}
For $T \in \Sm^m(\C^n)$, we have $||T||_\sigma = \max\{ |\left< T, v^{\otimes m} \right>| \ : \ ||v|| = 1\}$.
\end{theorem}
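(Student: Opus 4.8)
The plan is to prove Banach's theorem by the classical maximizer–equalization argument, showing that among the unit pure tensors achieving the spectral norm one may be taken on the diagonal. Write $f(v_1,\dots,v_m) = \langle T, v_1\otimes\cdots\otimes v_m\rangle$; with the paper's conventions this is $\C$-linear in each slot, and since $T\in\Sm^m(\C^n)$ is symmetric, $f$ is invariant under permuting its arguments. Set $\alpha = \|T\|_\sigma$ and $\beta = \max\{|\langle T, v^{\otimes m}\rangle| : \|v\|=1\}$. The inequality $\beta\le\alpha$ is immediate, since each $v^{\otimes m}$ is a unit pure tensor. So the content is $\alpha\le\beta$, and it suffices to produce a single unit vector $u$ with $|f(u,\dots,u)| = \alpha$.

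First I would invoke compactness to pick unit vectors $(u_1,\dots,u_m)$ maximizing $|f|$ over the product of unit spheres, with common value $\alpha$; multiplying $u_1$ by a unit phase (which leaves it on the sphere and multiplies $f$ by that phase) I may assume $f(u_1,\dots,u_m) = \alpha \ge 0$. The key structural fact is a slotwise stationarity: for each $i$, freezing the other arguments, the $\C$-linear functional $y\mapsto f(u_1,\dots,y,\dots,u_m)$ has modulus at most $\alpha$ on the unit sphere (otherwise $(u_1,\dots,u_m)$ would not be maximal) and attains $\alpha$ at $y=u_i$. Writing this functional as $y\mapsto \langle a_i, y\rangle$, these two facts force $\|a_i\| = \alpha$ and $a_i = \alpha u_i$, i.e.
$$f(u_1,\dots,\underbrace{y}_{i},\dots,u_m) = \alpha\,\langle u_i, y\rangle \qquad\text{for all } y\in\C^n.$$

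The crux is an equalization step that makes two arguments coincide without decreasing the value. Put $\rho = \langle u_1, u_2\rangle$ and $v = (u_1+u_2)/\|u_1+u_2\|$. Expanding by bilinearity in the first two slots and evaluating the three resulting terms with the stationarity identity ($f(u_1,u_1,u_3,\dots) = \alpha\bar\rho$, $f(u_2,u_2,u_3,\dots) = \alpha\rho$, and $f(u_1,u_2,u_3,\dots)=\alpha$) gives
$$f(u_1{+}u_2,u_1{+}u_2,u_3,\dots,u_m) = 2\alpha\bigl(1+\operatorname{Re}\rho\bigr) = \alpha\,\|u_1+u_2\|^2,$$
so $f(v,v,u_3,\dots,u_m) = \alpha$ whenever $\|u_1+u_2\|\ne 0$; thus $(v,v,u_3,\dots,u_m)$ is again a maximizer, now with its first two slots equal. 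Iterating this pairwise equalization collapses all arguments to a single unit vector $u$ with $f(u,\dots,u)=\alpha$, giving $\beta\ge\alpha$.

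I expect the main obstacle to be the degenerate antipodal case $\operatorname{Re}\rho = -1$ (forcing $u_2 = -u_1$), where $\|u_1+u_2\| = 0$ and the normalization breaks down; here the relation $f(u_1,u_2,\dots)=\alpha$ instead yields $|f(u_1,u_1,u_3,\dots,u_m)| = \alpha$, so one can simply discard $u_2$ in favor of $u_1$ (after a phase adjustment) and continue, or avoid the configuration by a small phase perturbation. The only genuinely complex point compared to Banach's real statement is keeping track of the conjugates $\rho$ and $\bar\rho$; the computation above shows the imaginary parts cancel, so the Hermitian case goes through with no essential change.
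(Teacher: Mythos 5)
The paper does not prove this statement at all: it is imported as a classical result, with pointers to Banach's 1938 paper and to Theorem~2.7 of Friedland--Kemp, so there is no in-paper proof to compare against. Judged on its own, your write-up correctly assembles the two standard ingredients. The slotwise stationarity $f(u_1,\dots,y,\dots,u_m)=\alpha\langle u_i,y\rangle$ is right (Cauchy--Schwarz with equality, assuming $\alpha>0$; the case $T=0$ is trivial), and the equalization computation $f(u_1{+}u_2,u_1{+}u_2,u_3,\dots,u_m)=\alpha(2+2\operatorname{Re}\rho)=\alpha\|u_1+u_2\|^2$ checks out, including the cancellation of imaginary parts; your treatment of the antipodal degeneracy is also fine.

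The genuine gap is the sentence ``iterating this pairwise equalization collapses all arguments to a single unit vector.'' A single equalization makes slots $1$ and $2$ equal, but the next equalization (say of slots $2$ and $3$) replaces slot $2$ by $(v+u_3)/\|v+u_3\|$ and thereby destroys the equality of slots $1$ and $2$; naive iteration just cycles and need not terminate or converge for $m\ge 3$. This is precisely the delicate point of Banach's theorem (recall the statement is \emph{false} for general norms, so something must use the Hilbert structure beyond one equalization step). Closing it requires an additional idea: for instance, an extremal principle over the compact set of maximizers with a potential function that your equalization step provably increases (the obvious candidates such as $\operatorname{Re}\sum_{i<j}\langle u_i,u_j\rangle$ do \emph{not} obviously increase, since the rescaling factor $2/\|u_1+u_2\|\ge 1$ multiplies cross terms that may have negative real part), or an induction on $m$ anchored at the $m=2$ case (Takagi factorization of complex symmetric matrices) together with a finer analysis of the set of maximizers. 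As written, the argument establishes only that \emph{some} maximizer has two equal slots, which does not yield $\alpha\le\beta$ for $m\ge 3$.
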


Observe that by Proposition~\ref{e-cover}, we can take $v_1,\dots,v_N$, with $N \leq (4/\varepsilon)^{n-1}$ such that $\cup_{\alpha =1}^N B(v_\alpha,\varepsilon)$ is all of $S^{n-1}$. For this entire section $v_1,\dots,v_N$ will denote such a choice. Using Banach's theorem above, we deduce the main technical lemma for the purposes of this section.

\begin{lemma}
For all $T \in \Sm_1^m(\C^n)$, we have $| \left< T ,v_I^{\otimes m} \right>|^2 \geq C^2 - m^2\varepsilon^2$ for some $I \in \{1,2,\dots,N\}$.
\end{lemma}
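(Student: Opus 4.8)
The plan is to mirror the key technical proposition from the general-tensor case, but using Banach's theorem to reduce everything to a single vector $v \in \C^n$ rather than choosing each tensor factor independently. By Banach's theorem (Theorem~\ref{theo:banach}), for a symmetric tensor $T$ the spectral norm is attained on a \emph{symmetric} pure tensor $v^{\otimes m}$ with $\|v\|=1$. So there exists a unit vector $a \in \C^n$ with $|\langle T, a^{\otimes m}\rangle| = \|T\|_\sigma \geq C_s \geq C$. The goal is then to replace $a$ by one of the net points $v_I$ while controlling the loss in the inner product.

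First I would pick the net point $v_I$ closest to $a$, i.e.\ with $a \in B(v_I,\varepsilon)$, so that $|\langle v_I, a\rangle|^2 \geq 1-\varepsilon$, hence (after adjusting phases, which we may do freely since only $|\langle \cdot,\cdot\rangle|$ matters) we can arrange $|\langle v_I,a\rangle| \geq \sqrt{1-\varepsilon} \geq 1-\varepsilon$. Then I would compare $\langle T, a^{\otimes m}\rangle$ with $\langle T, v_I^{\otimes m}\rangle$. The natural device is a telescoping estimate on the difference of the two rank-one symmetric tensors: writing
$$
a^{\otimes m} - v_I^{\otimes m} = \sum_{j=1}^{m} a^{\otimes (j-1)} \otimes (a - v_I) \otimes v_I^{\otimes (m-j)},
$$
and applying Cauchy--Schwarz through the unit tensor $T$. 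Since $\|a\| = \|v_I\| = 1$, each of the $m$ summands has norm $\|a - v_I\|$, giving $\|a^{\otimes m} - v_I^{\otimes m}\| \leq m\|a - v_I\|$. Thus
$$
\big| |\langle T, a^{\otimes m}\rangle| - |\langle T, v_I^{\otimes m}\rangle| \big| \leq |\langle T, a^{\otimes m} - v_I^{\otimes m}\rangle| \leq \|T\| \cdot m \|a - v_I\| = m\|a - v_I\|.
$$

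Next I would convert the bound $|\langle v_I, a\rangle| \geq 1-\varepsilon$ into a bound on $\|a - v_I\|$. After fixing the phase of $v_I$ so that $\langle v_I, a\rangle$ is real and nonnegative, $\|a - v_I\|^2 = 2 - 2\langle v_I,a\rangle \leq 2 - 2(1-\varepsilon) = 2\varepsilon$, so $\|a-v_I\| \leq \sqrt{2\varepsilon}$; I expect the intended cleaner route gives $\|a - v_I\| \leq \varepsilon$ directly (this is the step to nail down carefully, since the stated conclusion $C^2 - m^2\varepsilon^2$ suggests a loss of exactly $m\varepsilon$). Combining, $|\langle T, v_I^{\otimes m}\rangle| \geq C - m\varepsilon$, and squaring yields $|\langle T, v_I^{\otimes m}\rangle|^2 \geq (C - m\varepsilon)^2 = C^2 - 2Cm\varepsilon + m^2\varepsilon^2 \geq C^2 - m^2\varepsilon^2$ provided $2C \leq m\varepsilon$ — which cannot hold in general, so the honest bound is $|\langle T, v_I^{\otimes m}\rangle|^2 \geq C^2 - 2Cm\varepsilon \geq C^2 - 2m\varepsilon$ (using $C \leq 1$); I would reconcile this with the stated $C^2 - m^2\varepsilon^2$ by checking whether the authors intend $(|\langle T,v_I^{\otimes m}\rangle| + m\varepsilon)^2 \geq C^2$, which rearranges to exactly $|\langle T,v_I^{\otimes m}\rangle|^2 \geq C^2 - 2m\varepsilon|\langle T,v_I^{\otimes m}\rangle| - m^2\varepsilon^2$. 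The main obstacle is this bookkeeping at the end: getting the perturbation estimate to land precisely on the claimed $C^2 - m^2\varepsilon^2$ rather than a $C^2 - O(m\varepsilon)$ bound, which likely requires squaring \emph{before} estimating, i.e.\ bounding $C^2 \leq (|\langle T, v_I^{\otimes m}\rangle| + m\|a-v_I\|)^2$ and keeping only the $m^2\varepsilon^2$ cross-term after a favorable rearrangement. Everything else (Banach reduction, telescoping, phase normalization) is routine.
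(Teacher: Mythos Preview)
Your Banach reduction and choice of a net point $v_I$ with $|\langle a,v_I\rangle|^2\geq 1-\varepsilon$ match the paper exactly. The divergence is in how you pass from $a^{\otimes m}$ to $v_I^{\otimes m}$, and here the gap you flagged as ``bookkeeping'' is in fact a real loss in the method, not something that can be repaired by rearranging inequalities.

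Your telescoping bound gives $\|a^{\otimes m}-v_I^{\otimes m}\|\leq m\,\|a-v_I\|$, and from $|\langle a,v_I\rangle|^2\geq 1-\varepsilon$ the best you can extract (after phase optimization) is $\|a-v_I\|^2=2-2|\langle a,v_I\rangle|\leq 2-2\sqrt{1-\varepsilon}$, i.e.\ $\|a-v_I\|$ of order $\sqrt{\varepsilon}$, not $\varepsilon$. So telescoping yields a perturbation of size $\approx m\sqrt{\varepsilon}$, which after squaring produces an error term of order $m\sqrt{\varepsilon}$ (or $m^2\varepsilon$), never $m^2\varepsilon^2$. No amount of squaring-before-estimating fixes this; it is the telescoping itself that is too crude.

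The paper avoids telescoping entirely. It takes (WLOG) $v_I=e_n$ and decomposes $(\C^n)^{\otimes m}=\C\,e_n^{\otimes m}\oplus V$ orthogonally. The point is that $\langle T,e_n^{\otimes m}\rangle$ is \emph{exactly} the quantity $\langle T,v_I^{\otimes m}\rangle$ you want, and the component of $v^{\otimes m}$ along $e_n^{\otimes m}$ has modulus $|v_n|^m$ with $|v_n|^{2m}\geq(1-\varepsilon)^m\geq 1-m\varepsilon$; hence the orthogonal remainder $\widetilde{v^{\otimes m}}$ has $\|\widetilde{v^{\otimes m}}\|^2\leq m\varepsilon$. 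Splitting $\langle T,v^{\otimes m}\rangle$ along this orthogonal decomposition and using $\|T\|=1$ then gives the stated inequality directly. The key idea you are missing is to project $v^{\otimes m}$ orthogonally onto the line through $v_I^{\otimes m}$ rather than to subtract $v_I^{\otimes m}$ and bound the difference; projection exploits $|\langle v,v_I\rangle|^{2m}$ (an $m$th power of something $\geq 1-\varepsilon$), whereas your subtraction only sees $\|v-v_I\|$ multiplied $m$ times.
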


\begin{proof}
It follows from Theorem~\ref{theo:banach} that for any $T \in \Sm_1^m(\C^n)$, $\exists v \in \C^n$ with $||v|| =1$ such that $|\left< T, v^{\otimes m} \right>| \geq C_s$. By the choice of $v_1,\dots,v_N$, we have $|\left<v,v_I\right>|^2 \geq 1 - \varepsilon$ for some $I \in \{1,2,\dots,N\}$.

Let $e_1,\dots,e_n$ denote an orthonormal basis for $\C^n$. Without loss of generality, we can assume $v_I = e_n$. The vector space $(\C^n)^{\otimes m}$ has an orthonormal basis $\{e_{i_1i_2\dots i_m} := e_{i_1} \otimes \dots \otimes e_{i_m}\ |\ 1 \leq i_1,i_2,\dots,i_m  \leq n\}$. We can write 
$$
(\C^n)^{\otimes m} = \C e_{nn\dots n} \oplus V,
$$
where $V = \bigoplus\limits_{i_1\dots i_m \neq nn\dots n} \C e_{i_1\dots i_m}$

Given $T \in (\C^n)^{\otimes m}$, we can write $T = T_{nn\dots n} + \widetilde{T}$ uniquely where $T_{nn\dots n} \in \C e_{nn\dots n}$ and $\widetilde{T} \in V$.

Now, let $v = (v_1,\dots,v_n)$ be the coordinates of $v$ in the basis $e_1,\dots,e_n$. Then, we have

$$ 1 - \varepsilon \leq |\left<v,v_I\right>|^2 = |\left<v,e_n\right>|^2 = |v_n|^2. $$

Thus, we have $|(v^{\otimes m})_{nn \dots n}|^2 = |v_n^m|^2 = (|v_n|^2)^m \geq (1 - \varepsilon)^m \geq 1 - m\epsilon$. In particular, this implies that $||\widetilde{v^{\otimes m}} || \leq m\varepsilon$.

Now, we have
\begin{align*}
C_s^2 \leq |\left<T,v^{\otimes m} \right>|^2 &\leq | \left<\widetilde{T},\widetilde{v^{\otimes m}} \right>  + T_{nn \dots n} \overline{(v^{\otimes m})_{nn\dots n}}|^2 \\
& \leq |\left<T, \widetilde{v^{\otimes m}} \right> |^2 + |T_{nn\dots n}|^2|(v^{\otimes m})_{nn\dots n}|^2 \\
& \leq m^2\varepsilon^2 + | \left<T, v_I^{\otimes m} \right>|^2 |(v^{\otimes m})_{nn \dots n}|^2 \\
& \leq m^2 \varepsilon^2 +| \left<T, v_I^{\otimes m} \right>|^2. 
\end{align*}

The first and second inequalities are clear. The third inequality follows from $||T|| = 1$, $||\widetilde{v^{\otimes m}} || \leq m\varepsilon.$ and the fact that $T_{nn\dots n} = \left<T,e_n^{\otimes m}\right> = \left<T,v_I^{\otimes m} \right>$ (since $v_I = e_n$). The last inequality follows from $|v^{\otimes m}_{nn\dots n}|^2 = |v_n|^{2m} \leq 1$.

Hence, we get 

$$
|\left<T, v_I^{\otimes m} \right>|^2 \geq C_s^2 - m^2 \varepsilon^2.
$$
\end{proof}

Friedland and Kemp show that we have an isometry between $\Sm_1^m(\C^n)$ and $S^{2d_{n,m} -1}$. Along with the above lemma, we obtain:

\begin{corollary}
We have $\cup_{\alpha =1}^N B(v_\alpha^{\otimes m}, 1 - (C_s^2 - m^2\varepsilon^2))$ covers $\Sm_1^m(\C^n) = S^{2d_{n,m} - 1}$.
\end{corollary}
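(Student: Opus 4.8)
The plan is to recognize that this corollary is just a geometric reformulation of the preceding lemma, transported through the Friedland--Kemp isometry $\Sm_1^m(\C^n) \cong S^{2d_{n,m}-1}$. First I would pin down the notation. The space $\Sm^m(\C^n)$ carries the hermitian inner product it inherits as a subspace of $(\C^n)^{\otimes m}$, and under the isometry this corresponds to the standard inner product on the ambient sphere $S^{2d_{n,m}-1}$; the key point is that this identification preserves the hermitian form (an orthonormal basis of normalized monomials realizes it), so the quantity $|\langle v_\alpha^{\otimes m}, w\rangle|^2$ is computed identically on either side and the $\varepsilon$-ball notation $B(\cdot,\cdot)$ of the preliminaries is meaningful for centers and points living in $\Sm_1^m(\C^n)$. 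In particular each $v_\alpha^{\otimes m}$ is a symmetric tensor of unit norm, since $\|v_\alpha^{\otimes m}\| = \|v_\alpha\|^m = 1$, hence a genuine point of $S^{2d_{n,m}-1}$, and by definition
$$
B(v_\alpha^{\otimes m},\, 1 - (C_s^2 - m^2\varepsilon^2)) = \{\, w \in \Sm_1^m(\C^n) \mid |\langle v_\alpha^{\otimes m}, w\rangle|^2 \geq C_s^2 - m^2\varepsilon^2 \,\}.
$$

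The substance of the proof is already supplied by the lemma, so the second step is simply to invoke it. Given an arbitrary $T \in \Sm_1^m(\C^n)$, the lemma yields an index $I \in \{1,\dots,N\}$ with $|\langle T, v_I^{\otimes m}\rangle|^2 \geq C_s^2 - m^2\varepsilon^2$. Using conjugate symmetry, $|\langle T, v_I^{\otimes m}\rangle| = |\langle v_I^{\otimes m}, T\rangle|$, so this inequality is precisely the membership condition $T \in B(v_I^{\otimes m},\, 1 - (C_s^2 - m^2\varepsilon^2))$. Since $T$ was arbitrary, the union of these balls over $\alpha = 1,\dots,N$ covers all of $\Sm_1^m(\C^n) = S^{2d_{n,m}-1}$, which is exactly the claim.

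I expect no real obstacle here, as the difficulty has been absorbed into the lemma; the remaining work is bookkeeping. The one point meriting a line of care is confirming that the radius $1-(C_s^2-m^2\varepsilon^2)$ lies in the range where the ball notation is sensible: because the spectral norm is dominated by the euclidean norm we have $C_s \leq 1$, hence $C_s^2 - m^2\varepsilon^2 \leq 1$ and the radius is nonnegative, while if $C_s^2 - m^2\varepsilon^2 < 0$ the ball degenerates to the whole sphere and the covering is trivial. In every case the conclusion stands. I would also flag that appealing to the isometry is what permits measuring everything inside the $d_{n,m}$-dimensional sphere rather than the full $n^m$-dimensional tensor space, which is exactly what the subsequent volume-counting argument (the symmetric analogue of Corollary~\ref{asdf}) needs in order to extract the bound of Theorem~\ref{theo:sym}.
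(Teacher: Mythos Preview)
Your proposal is correct and matches the paper's approach exactly: the paper states this corollary as an immediate consequence of the preceding lemma together with the Friedland--Kemp isometry $\Sm_1^m(\C^n)\cong S^{2d_{n,m}-1}$, and your write-up simply fills in the bookkeeping (unit norm of $v_\alpha^{\otimes m}$, unwinding the definition of $B(\cdot,\cdot)$, and checking the radius is sensible) that the paper leaves implicit.
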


Using Corollary~\ref{frac}, one obtains the following inequality.

\begin{corollary}
We have $(\frac{4}{\varepsilon})^{n-1} (1 - (C_s^2 - m^2 \varepsilon^2))^{d_{n,m}-1} \geq 1$.
\end{corollary}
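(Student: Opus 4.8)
The final statement to prove is the Corollary asserting that
$$
\left(\tfrac{4}{\varepsilon}\right)^{n-1}\left(1-(C_s^2-m^2\varepsilon^2)\right)^{d_{n,m}-1}\geq 1.
$$

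My plan is to run the same volume-counting argument that produced Corollary~\ref{asdf} in the general-tensor case, now fed by the immediately preceding Corollary (the covering statement) together with Corollary~\ref{frac}. First I would recall the previous Corollary, which says that the union $\bigcup_{\alpha=1}^N B(v_\alpha^{\otimes m},\,1-(C_s^2-m^2\varepsilon^2))$ covers $\Sm_1^m(\C^n)$, identified via the Friedland--Kemp isometry with the sphere $S^{2d_{n,m}-1}$. Since a finite union of balls covers the whole sphere, the sum of their volumes must be at least the volume of the sphere. Dividing through by $\vol(S^{2d_{n,m}-1})$ and applying Corollary~\ref{frac}, each ball $B(\cdot,\delta)$ in an ambient sphere $S^{2d_{n,m}-1}$ contributes a volume fraction of exactly $\delta^{d_{n,m}-1}$; here $\delta=1-(C_s^2-m^2\varepsilon^2)$.

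The second ingredient is the bound on the number of balls. By the choice of $v_1,\dots,v_N$ fixed at the start of the section, we have $N\leq(4/\varepsilon)^{n-1}$ from Proposition~\ref{e-cover}. Putting these together, the covering inequality
$$
1\leq N\cdot\left(1-(C_s^2-m^2\varepsilon^2)\right)^{d_{n,m}-1}
$$
becomes, after substituting the bound on $N$, exactly the claimed inequality. This is a direct translation of the argument already carried out for general tensors, with the single structural change that the covering family is indexed only by the $N$ vectors $v_\alpha$ (not by $k$-tuples), because Banach's theorem lets us restrict attention to symmetric pure tensors $v^{\otimes m}$; hence the exponent on $N$ is $1$ rather than $k$.

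I do not expect a genuine obstacle here, since every component is already in place: the covering is given, Corollary~\ref{frac} supplies the per-ball volume fraction, and Proposition~\ref{e-cover} supplies the count. The only point requiring a touch of care is the legitimacy of applying Corollary~\ref{frac} in dimension $d_{n,m}$ rather than $n$: one must note that the isometry $\Sm_1^m(\C^n)\cong S^{2d_{n,m}-1}$ places us on a genuine sphere of the correct dimension, so that the volume-fraction formula $\vol(B(v,\delta))/\vol(S^{2d-1})=\delta^{d-1}$ applies verbatim with $d=d_{n,m}$. Granting that, the inequality follows immediately.
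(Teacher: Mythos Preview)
Your proposal is correct and follows exactly the approach the paper intends: the paper simply states that the inequality is obtained from the preceding covering corollary together with Corollary~\ref{frac}, and you have spelled out precisely that volume-counting argument, including the point that the Friedland--Kemp isometry places us on $S^{2d_{n,m}-1}$ so that Corollary~\ref{frac} applies with exponent $d_{n,m}-1$. The only detail worth noting is that the bound $N\leq(4/\varepsilon)^{n-1}$ is already part of the standing setup for the section, so it is available exactly as you use it.
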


\begin{proof} [Proof of Theorem~\ref{theo:sym}]
Applying the natural logarithm to the corollary above, we get
\begin{align*}
0 & \leq (n-1) \ln(4/\varepsilon) + (d_{n,m} -1) \ln (1 - (C_s^2 - m^2 \varepsilon^2)) \\
& \leq (n-1) \ln(4/\varepsilon) + (d_{n,m} -1) (- (C_s^2 - m^2 \varepsilon^2))
\end{align*}

Hence, we get 
$$
(d_{n,m} -1)  (C_s^2 - m^2 \varepsilon^2) \leq (n-1) \ln(4/\varepsilon).
$$

Equivalently, we get
$$
C_s^2 \leq m^2 \varepsilon^2 +  \frac{(n-1) \ln(4/\varepsilon)}{d_{n,m}-1}
$$
as required.
\end{proof}

Once again, by taking a natural logarithm on both sides in Theorem~\ref{theo:sym}, we obtain:
$$
-2 \log_2 C_s \geq -\log_2 \left(m^2 \varepsilon^2 +  \frac{(n-1) \ln(4/\varepsilon)}{d_{n,m}-1}\right)
$$

Replacing $\varepsilon$ by $\delta/m$, we get

\begin{equation} \label{dumb}
E^s_{\max} = -2 \log_2 C_s \geq -\log_2\left(\delta^2 +  \frac{(n-1) \ln(4m/\delta)}{d_{n,m}-1}\right)
\end{equation}

\begin{proof} [Proof of Corollary~\ref{sym:main}]
Substituting $\delta = (d_{n,m}-1)^{-1/2}$ in the above expression, we get
$$
E^s_{\max} \geq \log_2\frac{d_{n,m}-1} {1 +  (n-1) \ln(4m(d_{n,m}-1)^{1/2})} 
$$
For $m$ sufficiently large, we claim that
\begin{equation} \label{computation}
\frac{d_{n,m}-1} {1 +  (n-1) \ln(4m(d_{n,m}-1)^{1/2})} \geq \frac{d_{n,m}} {n \ln (d_{n,m})}
\end{equation}

from which we can conclude that
$$
E^s_{\max} \geq \log_2(d_{n,m}) -\log_2(\ln(d_{n,m})) - \log_2(n).
$$

So, it remains to prove (\ref{computation}) above. For $m \gg 0$, the following is easy to verify because $d_{n,m} = O(m^{n-1})$.
$$
1 + (n-1)\ln(4m(d_{n,m}-1)^{1/2}) \leq 1 + (n-1)\ln(4m(d_{n,m})^{1/2}) \leq 0.99 n\ln(d_{n,m}).
$$
This means that 
$$
\frac{1 + (n-1)\ln(4m(d_{n,m}-1)^{1/2})}{n \ln(d_{n,m})} \leq 0.99 \leq 1 - \frac{1}{d_{n,m}} = \frac{d_{n,m}-1}{d_{n,m}},
$$
which rearranged gives us (\ref{computation}) as required.
\end{proof}

\begin{proof} [Proof of Corollary~\ref{sym:qubit}]
Setting $n = 2$, (\ref{dumb}) simplifies to
$$
E^s_{\max} \geq -\log_2\left(\delta^2 + \frac {\ln(4m/\delta)}{m}\right) = \log_2(m) - \log_2(m \delta^2 + \ln(4m/\delta)).
$$

If we take $\delta = \frac{1}{\sqrt{m}}$, we get 
$$
E^s_{\max} \geq \log_2(m) - \log_2(1 + \ln(4m \sqrt{m})),
$$
\end{proof}


\begin{thebibliography}{99}
\bibitem{AFOV} L~Amico, R.~Fazio, A.~Osterloh and V.~Vedral, {\it Entanglement in many-body systems}, Rev. Mod. Phys.~{\bf 80} (2008) 517--576.

\bibitem{Banach} S.~Banach, {\it U\"ber homogene polynome in ($L^2$)}, Studia Math.~{\bf 7} (1938), 36--44.

\bibitem{BL} H.~Barnum and N.~Linden, {\it Monotones and invariants for multi-particle quantum states}, Quantum information and computation, J.~Phys.~A.~{\bf 34} (2001), no.~35, 6787--6805.

\bibitem{GFE} D.~Gross, S.~T.~Flammia and J.~Eisert, {\it Most quantum states are too entangled to be useful as computational resources}, Phys. Rev. Lett. {\bf~102}, 190501 (2009).

\bibitem{Derksen} H.~Derksen, {\it On the nuclear norm and the singular value decomposition of tensors}, Found. Comput. Math.~{\bf 16}, no.~3, (2016), 779-811.
\bibitem{DFLW} H.~Derksen, S.~Friedland, L.~H.~Lim and L.~Wang, {\it Theoretical and computational aspects of entanglement}, {\tt arXiv:1705.07160} [quant-ph], 2017.
\bibitem{FK} S.~Friedland and T.~Kemp, {\it Most Boson quantum states are almost maximally entangled}, Proc. Amer. Math. Soc.~{\bf 146} (2018), no.~12, 5035--5049.

\bibitem{FL} S.~Friedland and L.~H.~Lim, {\it The computational complexity of duality}, SIAM J. Optim.~{\bf 26} (2016), no.~4, 2378--2393.

\bibitem{FL2} S.~Friedland and L.~H.~Lim, {\it Nuclear norm of higher-order tensors}, Math. Comp.~{\bf 87} (2018), no.~311, 1255--1281.

\bibitem{FO} S.~Friedland and G.~Ottaviani, {\it The number of singular vector tuples and uniqueness of best rank one approximation of tensors}, Found. Comput. Math.~{\bf 14} (2014), no.~6, 1209--1242.


\bibitem{HHHH} R.~Horodecki, P.~Horodecki, M.~Horodecki and K.~Horodecki, {\it Quantum entanglement}, Rev. Mod. Phys.~{\bf 81}, (2009), 865--942.

\bibitem{NW} J.~Nie and L.~Wang, {\it Semidefinite relaxations for best rank-$1$ tensor approximations}, SIAM Journal on Matrix Analysis and Applications~{\bf 35} (2014), 1155--1179.

\bibitem{Nie} J.~Nie, {\it Symmetric tensor nuclear norms}, SIAM J. Appl. Algebra Geom.~{\bf 1} (2017), no.~1, 599--625.

\bibitem{PV} M.~B.~Plenio and S.~S.~Virmani, {\it An introduction to entanglement theory}, Quantum Information and Coherence, edited by E.~Andersson and P.~Ohberg (2014), 173--209.

\bibitem{QZ} L.~Qi, G.~Zhang and G.~Ni, {\it How entangled can a multi-party system possibly be?} Phys. Lett. A~{\bf 382} (2018), no.~22, 1465--1471.

\bibitem{Shimony} A.~Shimony, {\it Degree of entanglement}, Fundamental problems in quantum theory (Baltimore, MD, 1994), Ann. New York Acad. Sci.~{\bf 755}, New York Acad. Sci., New York (1995), 675--679.

\bibitem{YZ} M.~Yuan and C.-H.~Zhang, {\it On tensor completion via nuclear norm minimization}, 	arXiv:1405.1773 [stat.ML], 2014.


\bibitem{WG} T.~Wei and P.~M.~Goldbart, {\it Geometric measure of entanglement and applications to bipartite and multipartite quantum states}, Phys. Rev. A~{\bf 68}, 042307, (2003). 

\end{thebibliography}
\end{document}